\newtheorem{theorem}{Theorem}
\newtheorem{proposition}{Proposition}
\newtheorem{lemma}{Lemma}
\renewenvironment{proof}{\noindent {\bfseries Proof.}}{\hfill $\blacksquare$\vspace{0.4cm}}
\begin{document}


\onehalfspacing

\begin{center}
{\Large{\bf Resilience Analysis for Competing Populations\footnote{Preprint of an article submitted for consideration in \textit{Bulletin of Mathematical Biology}.
}}}
\end{center}

\medskip

\begin{center}
{\large {\large Artur C\' esar Fassoni}} 
\\
{\em Instituto de Matem\'atica e Computa\c c\~ao, Universidade Federal de
Itajub\'a,\\ Avenida BPS 1303, Pinheirinho, CEP 37.500--903, Itajub\'a,
MG, Brazil.\\
}
e--mail: fassoni@unifei.edu.br
\end{center}

\begin{center}
{\large Denis de Carvalho Braga}
\\
{\em Instituto de Matem\'atica e Computa\c c\~ao, Universidade Federal de
Itajub\'a,\\ Avenida BPS 1303, Pinheirinho, CEP 37.500--903, Itajub\'a,
MG, Brazil.\\
}
e--mail: braga@unifei.edu.br
\end{center}

\medskip

\begin{center}
{\bf Abstract}
\end{center}

Ecological resilience refers to the ability of a system to retain its state when subject to state variables perturbations or parameter changes. While understanding and quantifying resilience is crucial to anticipate the possible regime shifts, characterizing the influence of the system parameters on resilience is the first step towards controlling the system to avoid undesirable critical transitions. In this paper, we apply tools of qualitative theory of differential equations to study the resilience of competing populations as modeled by the classical Lotka-Volterra system. Within the high interspecific competition regime, such model exhibits bistability, and the boundary between the basins of attraction corresponding to exclusive survival of each population is the stable manifold of a saddle-point. Studying such manifold and its behavior in terms of the model parameters, we characterized the populations resilience: while increasing competitiveness leads to higher resilience, it is not always the case with respect to reproduction. Within a pioneering context where both populations initiate with few individuals, increasing reproduction leads to an increase in resilience; however, within an environment previously dominated by one population and then invaded by the other, an increase in resilience is obtained by decreasing the reproduction rate. Besides providing interesting insights for the dynamics of competing population, this work brings near to each other the theoretical concepts of ecological resilience and the mathematical methods of differential equations and stimulates the development and application of new mathematical tools for ecological resilience.

\bigskip

\noindent {\small {\bf Key--words}: Nonlinear dynamics, invariant manifolds, basins of attraction, biological invasions, ecological resilience.}

\noindent {\small {\bf 2010 Mathematics Subject Classification:} 37C27, 37G15, 37C75.}

\newpage

\section{Introduction}\label{secIntro}

Ecological resilience refers to the ability of a system to retain its present state under environmental disturbance \cite{walker2004resilience,folke2006resilience}. State variable perturbations beyond a critical threshold drive the system outside the current basin of attraction and lead to a regime shift, while parameter changes caused by environmental change may move the basin thresholds or cause bifurcations. Because real biological systems are constantly subject to such perturbations and changes, understanding the resilience of a system and predicting the occurrence of a regime shift in advance is of major importance and increasing necessity for ecology and biology problems such as biological invasions, climate changing, agricultural management, cancer, cell death, neuronal dynamics, etc \cite{scheffer2001catastrophic, barnosky2012approaching,  trotta2012global,scheffer2013migraine,fassoni2014mathematical,fassoni2014basins,konstorum2016feedback, fassoni2017ecological}.

\smallskip

The study of mathematical models within the ecological resilience framework can be divided into three different aspects or layers. First, one has to {\it define resilience}, i.e., the concepts of ecological resilience, which are summarized with a simple question: ``resilience from what to what?'' \cite{carpenter2001metaphor}. Then, it is necessary to \textit{quantify resilience}, i.e., to develop and use methods which transform the concepts in numerical quantities and data \cite{mitra2015integrative, meyer2016mathematical}. The final step is to actually \textit{apply the concepts and methods} in order to derive qualitative and quantitative conclusions about the system resilience \cite{menck2013basin,fassoni2017ecological}. The resilience approach complements and enlarges the linear-stability paradigm and should result in new insights, predictions and control strategies for the modeled phenomena, providing answers to questions such as \textit{how far is the system near to a tipping point? How to increase a basin of attraction? How to prevent a critical transition? How to move the system to another basin?}

\smallskip

From the theoretical point of view, the importance of resilience was raised by Holling in 1970 \cite{holling1973resilience}, and several aspects within this theory are well defined \cite{carpenter2001metaphor,scheffer2001catastrophic,walker2004resilience,folke2006resilience}. The resilience of a system can be classified with respect two different types of change: resilience to state variable perturbations and resilience to parameter changes. The first concerns temporal disturbances in the variables governed by the intrinsic laws of that system, while the second refers to changes in the intensities of these laws themselves. While state variable perturbations may lead to a change in the present basin of attraction, parameter changes modify the phase portrait, thereby changing the position and shape of basin boundaries, and ultimately lead to bifurcations, implying in the disappearance of a stable state or loss of its stability. In real systems, both types of perturbations occur along time and may be caused by random fluctuations inherent to the system or by external forces. The combination of both perturbations is actually the most likely to drive the system to a critical transition: a state variable perturbation itself may not reach the required basin threshold but a change in some parameter may facilitate this transition by moving the position of such threshold.

\smallskip

At least three different measures are well defined to quantify the resilience of a system with respect to state variable perturbations \cite{walker2004resilience,meyer2016mathematical}. By quantifying distinct aspects of the size and shape of a basin of attraction, these three measures are complementary to each other and together provide a complete scenario for analyzing resilience. The first is called \textit{precariousness} and measures the distance from the actual state to the basin boundary (which may or not be restricted to a set of plausible directions of perturbations). It measures the minimal perturbation needed to drive the system trajectory outside the current basin of attraction. Another measure is the \textit{resistance} which quantifies the basin steepness or the magnitude of the vector field toward the actual state. It measures how fast the system trajectory returns to the initial point after a given perturbation. Finally, the \textit{latitude} measures the whole basin size, for instance, the basin area for two-dimensional systems. It is proportional to the probability that a random perturbation (or initial condition) drives the system trajectory to that basin of attraction.

\smallskip

With respect to the second methodological aspect, although the concepts are well defined, one can say that the methods to quantify ecological resilience in a system of ordinary differential equations are still at an initial stage of development and are specially complex for high-dimensional systems \cite{meyer2016mathematical}. As can be anticipated from the above description, assessing all the three resilience measures (precariousness, resistance and latitude) requires calculating the separatrix between the basins of attraction. For a $n$-dimensional system, the separatrix in general is a $(n-1)$-dimensional manifold formed by stable manifolds of saddle-points or limit cycles \cite{chiang1988stability}. While numerical and analytical methods to estimate invariant manifolds can be easily applied in one or two-dimensional system \cite{genesio1985estimation}, higher dimensions require more complex approaches. Numerical continuation methods are very accurate even when applied to systems exhibiting complex dynamics (such as the Lorenz system) but involve sophisticated implementation \cite{krauskopf2007computing,guckenheimer2004fast}, while simpler methods using basin sampling \cite{cavoretto2016robust, fassoni2014mathematical,delboni2017mathematical} or Monte Carlo approaches \cite{mitra2015integrative} may achieve a good compromise between accuracy and ease of use, especially when the separatrix is described as a graph of a function \cite{cavoretto2016robust}. Analytical methods such as the Picard method and power series method were successfully applied to two dimensional systems and may provide useful analytical information, such as an expression of the separatrix slope or curvature as a function of parameters \cite{fassoni2014basins,konstorum2016feedback, fassoni2017ecological}. The application of these analytic methods may be facilitated by computer algebra systems, but even so these are difficult to devise in models with more than two dimensions. A recent publication introduced a \textit{flow-kick} framework to quantify resilience explicitly in terms of the magnitude and frequency of repeated disturbances \cite{meyer2018quantifying}.

\smallskip

Finally, we have the third step, the application of concepts and methods and derivation of conclusions about the system resilience. Although multistability is a common phenomenon in many mathematical models for biological phenomena, few works analyzed such models within the the framework of ecological resilience \cite{fassoni2014basins,konstorum2016feedback,fassoni2017ecological}. One reason may be that the theoretical concepts in general come from the ecologists community and are still unknown for the modeling community or still not encoded in a concrete mathematical framework, although a recent review provided an good starting point \cite{meyer2016mathematical}. To cite a few works which somehow applied these concepts, we refer to a model for tumor growth with feedback regulation, where Hillen et. al used numeric and analytic estimates of the basin boundary to derive optimal strategies of tumor treatment \cite{konstorum2016feedback}. Ledzewicz used a linear approximation of the basin boundary to define a criteria for an optimal control problem for cancer treatment \cite{ledzewicz2015dynamics}. Also in a model for tumor growth and treatment, Fassoni used the theoretical framework of ecological resilience and critical transitions to describe the process of cancer onset and cure, and performed a ``resilience analysis'' with respect to change in key parameters \cite{fassoni2017ecological}.

\smallskip

With this work, we aim to contribute to the latter two aspects of using ecological resilience in mathematical models, i.e., to provide a step further in the development of methods and to illustrate how this approach may provide interesting (or even surprising) biological conclusions which potentially may be used for delineating strategies of control or management. We consider the classical Lotka-Volterra model of competition between two populations \cite{murray2007mathematical} as a model system. In a previous work, Fassoni studied this model from a resilience point of view, but most conclusions were obtained numerically for a given set of parameter values \cite{fassoni2014basins}. A recent work studied this model focusing on the ratio between the basins areas \cite{chiralt2017quantitative}. Here, we use techniques from the qualitative theory of ordinary differential equations to derive analytical properties of the basin boundary in the whole parameter space. These mathematical properties translate into different biological implications for the resilience of competing populations regarding the measures of precariousness and latitude. Calculating the derivative of the separatrix with respect to the parameters, we show that when a population increases its competitiveness, the separatrix moves in such way the basin of attraction corresponding to the survival of that population becomes larger. With respect to the proliferation rate, we show that, for low initial densities of both populations, it is advantageous for a species to reproduce fast, whereas for high initial densities, it is advantageous to reproduce slow. Using singular perturbation to analyze the limit cases, our results show that a population that is initially abundant and drastically reduces its proliferation cannot be invaded by a competitor. We believe that, besides providing interesting biological insights for competing populations, the techniques used here constitute a toolbox that can be applied to analyze other ordinary differential equation models from the resilience point of view.

\smallskip

The paper is organized as follows: in Section \ref{secMainRes}, the results are stated as a theorem and their biological implications are discussed. In Section \ref{secMet}, the proofs are presented; this section can be omitted by the biologically oriented reader. Finally, in Section \ref{secConclusion} the conclusion is presented.

\section{Results} \label{secMainRes}

We study the resilience of two competing populations in a scenario described by the classical Lotka-Volterra-Gause model. This is a classical model in Mathematical Biology and it is commonly used as a building block for more complex models in different contexts \cite{murray2007mathematical, hofbauer1998evolutionary}. Here, we interpret the model as describing the scenario where one population is native within an environment which is invaded by a second population. This is a particular interpretation which emphasizes the resilience point of view but our results are also valid in a context where both populations are native or both invade an empty environment.

\subsection{Mathematical statement}

Denoting the abundance of the two populations at time $T$ by $N(T)$ and $I(T)$ (native and invader), the model is given by the following system of ordinary differential equations
\begin{equation}\label{sis1dim}
\left\{
{\setlength\arraycolsep{2pt}
\begin{array}{rcl}
\dfrac{dN}{dT}  & = &  r_N N \left(1- \dfrac{N+aI}{K_N} \right), \vspace{0.2cm}\\
\dfrac{dI}{dT}   & = & r_I I   \left(1- \dfrac{I+bN}{K_I} \right),
\end{array}}
\right. 
\end{equation}
where $r_N$, $r_I$, $K_N$, $K_I$, $a$ and $b$ are positive real numbers \cite{murray2007mathematical}.

The model assumes a limited amount of resources for supplying both populations, which compete for these resources. Therefore, the kinetics of both populations is described by a logistic growth (net growth rates $r_N$ and $r_I$, carrying capacities $K_N$ and $K_I$) and interspecific competition parameters $b$ and $a$. We consider a nondimensional version of system \eqref{sis1dim}, obtained by defining the nondimensional variables and parameters
\[
x=\dfrac{N}{K_N}, \ \
y=\dfrac{I}{K_I}, \ \
t=r_N T,  \ \
\alpha = \dfrac{K_I}{K_N} a, \ \
\beta = \dfrac{K_N}{K_I} b, \ \
\delta = \dfrac{r_I}{r_N}.
\]
The nondimensional model is given by
\begin{equation}\label{siscompeticao}
\left\{
{\setlength\arraycolsep{2pt}
\begin{array}{rcl}
\dfrac{dx}{dt}  & = &  x \left(1-x-\alpha y\right), \vspace{0.2cm} \\
\dfrac{dy}{dt}   & = & \delta y   \left(1-y-\beta x\right).
\end{array}}
\right.
\end{equation}

Within the strong competition regime, the competitive exclusion principle is observed: only one population survives. Mathematically, the strong competition regime corresponds to parameter conditions $\alpha , \beta>1$, which means that the competitive pressure that one population exerts on the other population (interspecific competition) is higher than the pressure that the population exert on itself (intraspecific competition). In this scenario, the equilibrium points corresponding to survival of one population and extinction of the other, $P_N=(1,0)$ and $P_I=(0,1)$, are locally asymptotically stable and the phase space is divided into two regions, the basins of attraction of $P_N$ and $P_I$ (see Figure \ref{FigMain}). The separatrix between these regions is the stable manifold $S$ of the saddle point $P_C=\left(A,B\right)$, where $A=(\alpha-1)/(\alpha\beta-1)$ and $B=(\beta - 1)/(\alpha\beta - 1)$. The left branch of $S$ is a heteroclinic connection from equilibrium $P_0=(0,0)$, a unstable node, to $P_C$. In this bistable regime, the outcome (which population will survive) strongly depends on the initial conditions.

\begin{figure}[!htb]
\begin{center}
\includegraphics[width=0.75\linewidth]{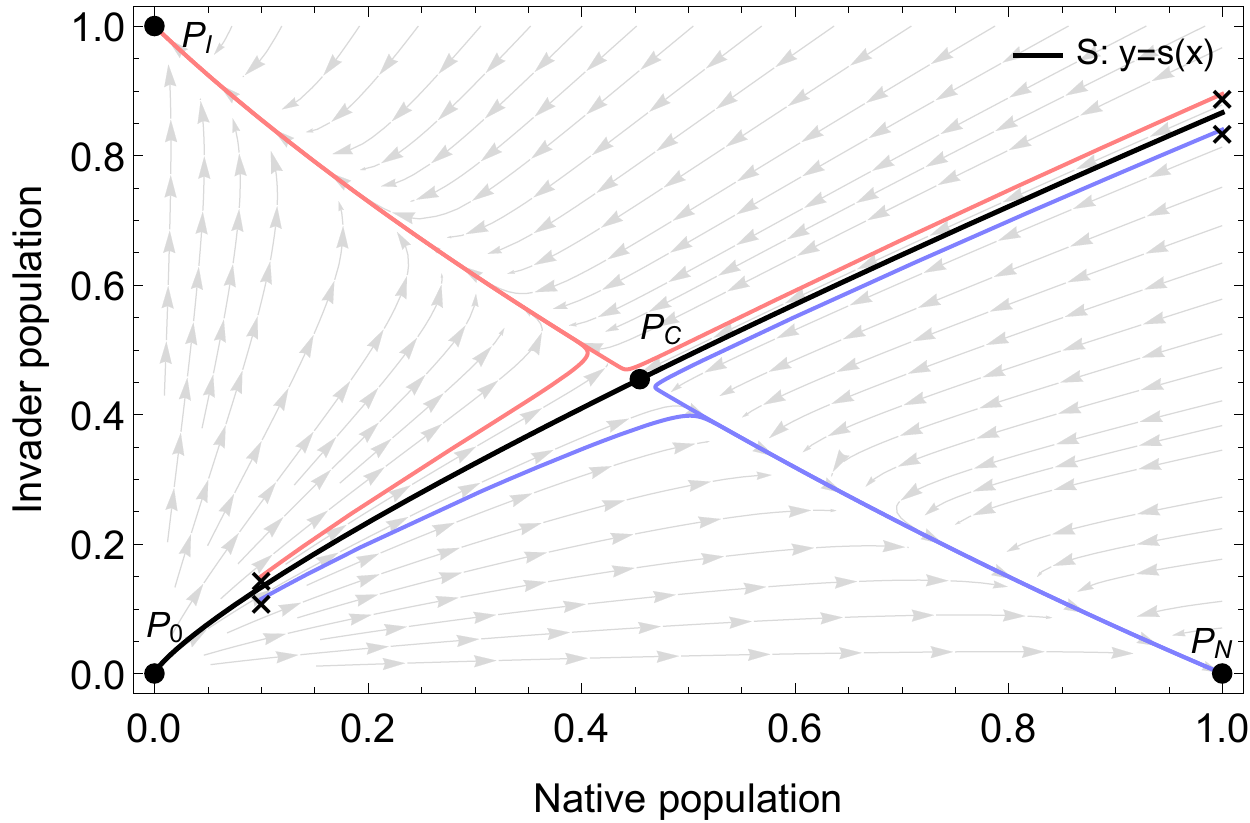}
\caption{Phase portrait of system \eqref{siscompeticao} in the strong competition regime, with $\alpha>1$, $\beta>1$ and $\delta>0$. The equilibrium points $P_N$ and $P_I$ are stable nodes, while $P_C$ is a saddle-point and $P_0$ is an unstable node.The separatrix $S$ between the basins of attraction (black curve) is the stable manifold of $P_C$ with the point $P_0$. Solutions starting below $S$ (blue curves, initial conditions indicated with ``$\times$'') converge to $P_N$, while solutions starting above $S$ (red curves, initial conditions indicated with ``$\times$'') converge to $P_I$. Theorem \ref{teoMain} states that $S$ is a graph of a smooth and increasing function $y=s(x)$.
\label{FigMain}}
\end{center}
\end{figure}

In a previous work, Fassoni used numerical simulations to describe the behavior of the separatrix as the model parameters vary \cite{fassoni2014basins}. Here, we extend those results by analytically characterizing such behavior within the entire parameter space. We also study the limiting cases when time scales of reproduction of each population become very different, i.e., the cases $\delta=r_I/r_N \to 0$ and $1/\delta=r_N/r_I \to 0$. Such results are suggested by the study of a formula which approximates the separatrix and serves as a model for understanding its behavior. Theorem \ref{teoMain} summarizes our results. The proof is presented in the Section \ref{secMet}.

\begin{theorem}\label{teoMain}
Consider system \eqref{siscompeticao} with conditions $\alpha,\beta>1$ and $\delta>0$. Let $S$ be the stable manifold of the saddle point $P_C=(A,B)$, and $\hat{S}=S\cup \{P_0\}$. Then:
\begin{description}
\item[1.] $S$ and $\hat{S}$ have the following properties:
\begin{enumerate}
\item[a.] $S$ is the boundary (or separatrix) between the basins of attraction of $P_N$ and $P_I$;
\item[b.] For fixed parameter values, $(\alpha , \beta , \delta )$, $\hat{S}$ is described as a graph of a smooth function, $x\mapsto y=s(x)$, $x\geq 0$, with $s(0)=0$; 
\item[c.] Solutions with initial conditions $(x_0,y_0)$, $x_0>0,y_0>0$, starting below $S$, i.e., $y_0<s(x_0)$, converge to $P_N$, and solutions starting above $S$, i.e., $y_0>s(x_0)$, converge to $P_I$;
\item[d.] The function $y=s(x)$ is strictly increasing with respect to $x$.
\end{enumerate}
\item[2.] The function $s(x)$ satisfies the integral equation
\begin{equation}
\label{eqIntegralS}
s(x)=s^{\ast}(x) \exp \left ( \delta \int_A^x  \dfrac{(\beta-1) s(\bar{x}) -(\alpha -1) \bar{x}}{\bar{x}(1-\bar{x}-\alpha s(\bar{x}))}\,\mathrm{d}\bar{x} \right).
\end{equation}
Moreover, the function $s^{\ast}(x)=B (x/A)^{\delta}$, $x\geq 0$, is a uniform approximation for $s(x)$ and coincides with $s(x)$ when $\delta=1$.
\item[3.] For each fixed $x>0$:
\begin{enumerate}
\item[a.] The function $\alpha \mapsto s(x,\alpha)$ is strictly decreasing;
\item[b.] The function $\beta \mapsto s(x,\beta)$ is strictly increasing;
\item[c.] The function $\delta \mapsto s(x,\delta)$ is strictly increasing if $x<A$ and is strictly decreasing if $x>A$.
\end{enumerate}
\item[4.] The behavior of $\hat{S}$ for the limit cases $\delta \to 0$ and $1/\delta \to 0$ is the following:
\begin{enumerate}
\item[a.] When $\delta \to 0$, $\hat{S}$ converges to the set given by the union of point $P_0$ with the horizontal line $y=B$, $x>0$, i.e.,
\[
\hat{S} \to S_0=\{P_0\} \cup \{ (x,B), \; x>0 \}, \ \ \ {\rm when} \ \ \  \delta \to 0;
\]
\item[b.] When $1/\delta \to 0$, $\hat{S}$ converges to the set given by the union of point $P_0$ with the vertical line $x=A$, $y>0$, i.e.,
\[
\hat{S} \to S_\infty=\{P_0\} \cup \{ (A,y), \; y>0 \}, \ \ \ {\rm when} \ \ \  1/\delta \to 0.
\]
\end{enumerate}
\end{description}
\end{theorem}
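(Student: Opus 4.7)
The plan is to prove the four parts in sequence, combining phase-plane analysis for part~1, integration of the separatrix ODE for part~2, implicit/variational differentiation for part~3, and geometric singular perturbation theory for part~4.

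For Part 1, I would linearize at $P_C=(A,B)$; using $1-A-\alpha B=0$ and $1-B-\beta A=0$ the Jacobian simplifies to entries $-A,-\alpha A,-\delta\beta B,-\delta B$, with trace $-(A+\delta B)<0$ and determinant $\delta AB(1-\alpha\beta)<0$, confirming the saddle structure, and a short computation shows the stable eigenvector has strictly positive slope, so $\hat{S}$ is locally a smooth graph $y=s(x)$ near $P_C$ with $s'(A)>0$. To extend globally, I use the four open regions defined by the nullclines $N_x:y=(1-x)/\alpha$ and $N_y:y=1-\beta x$ meeting at $P_C$: the left branch of $S$ lies in the region below both nullclines (where $\dot{x},\dot{y}>0$) and the right branch in the region above both (where $\dot{x},\dot{y}<0$), so in either region $s'(x)=\dot{y}/\dot{x}>0$, establishing (d) and permitting global parametrization by $x$. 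Tracing the left branch backward in time, $(x,y)$ is monotonically decreasing in both coordinates inside a bounded invariant triangle whose only accessible equilibrium other than $P_C$ is $P_0$, so by Poincar\'e--Bendixson (using the absence of interior periodic orbits in competitive planar systems) the backward limit is $P_0$, giving $s(0^+)=0$. Claims (a) and (c) then follow because $S$ is the one-dimensional stable manifold of the unique interior saddle and $P_N,P_I$ are the only attractors in the open first quadrant.

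For Part 2, along the separatrix $s'(x)=\delta s(1-s-\beta x)/[x(1-x-\alpha s)]$. Dividing by $s$, subtracting $(\ln s^*)'=\delta/x$, and algebraically simplifying the numerator reduces the right-hand side to a rational multiple of $[(\alpha-1)s-(\beta-1)x]/[x(1-x-\alpha s)]$; integrating from $A$ to $x$ and using the common boundary value $s(A)=s^*(A)=B$ yields~\eqref{eqIntegralS}. The identity $s\equiv s^*$ when $\delta=1$ reduces to checking that $y=(B/A)x$ solves the separatrix ODE, which in turn follows from the algebraic identity $B/A=(\beta-1)/(\alpha-1)$ read off the explicit formulas for $A,B$. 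The uniform-approximation claim is qualitative: $s^*$ has the correct endpoint values $s^*(0)=0$, $s^*(A)=B$, the correct limiting behavior as $\delta\to 0,\infty$, and the integral equation controls the deviation $s-s^*$.

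For Part 3, let $v(x)=\partial s/\partial p$ for a parameter $p\in\{\alpha,\beta,\delta\}$. Differentiating the separatrix ODE in $p$ gives a linear variational equation for $v$, and implicit differentiation of the constraint $s(A(p),p)=B(p)$ at $x=A$ supplies the boundary value $v(A)=\partial_p B-s'(A)\,\partial_p A$. For $p=\alpha$, the explicit formulas give $\partial_\alpha A>0$ and $\partial_\alpha B<0$, so $v(A)<0$; a Gronwall-type sign-preservation argument on the linear variational ODE propagates this sign to all of $(0,\infty)$, or equivalently one compares orbits of the systems with parameters $\alpha_1<\alpha_2$ by noting that the $x$-component of the vector field is strictly decreasing in $\alpha$ while the $y$-component is independent of $\alpha$, exploiting the competitive (monotone) structure. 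Case (b) follows by the symmetry $(x,y,\alpha,\beta,\delta)\leftrightarrow(y,x,\beta,\alpha,1/\delta)$ of the system. Case (c) is the most delicate because $A,B$ do not depend on $\delta$, forcing $v(A)=0$; the sign of $v$ on either side of $A$ must be extracted from the leading nontrivial term of the variational expansion at $x=A$, or more transparently by bootstrapping from $s^*$: direct differentiation of $B(x/A)^\delta$ exhibits a definite sign change of $\partial_\delta s^*$ across $x=A$, and the integral equation then controls the deviation so that $s$ inherits the same sign pattern. I expect case (c) to be the principal technical obstacle.

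For Part 4, I would apply geometric singular perturbation theory. When $\delta\to 0$ the system is slow--fast with $y$ slow and $x$ fast; the attracting portion of the critical manifold is $\{x=1-\alpha y,\,0<y<1/\alpha\}$, on which the reduced slow dynamics is $\dot{y}=y(\alpha\beta-1)(y-B)$. Since $0<B<1/\alpha$ (which follows from $\alpha,\beta>1$), $y=B$ is an unstable equilibrium of the reduced flow separating trajectories that descend to $y=0$ (hence to $P_N$) from those that escape through $y=1/\alpha$ onto the invariant axis $x=0$ (hence to $P_I$). Fenichel's theorem then guarantees the perturbed separatrix converges, uniformly on compact subsets of the open first quadrant, to the horizontal fiber $\{y=B,\,x>0\}$. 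The case $1/\delta\to 0$ is analogous with the roles of $x$ and $y$ swapped. The chief subtlety lies near the degeneracies of normal hyperbolicity: at $(0,1/\alpha)$ where the slow manifold meets $x=0$, and at $P_0$ whose eigenvalue $\delta$ itself vanishes in the limit; these boundary points require separate local analysis to complete the set-convergence of $\hat{S}$.
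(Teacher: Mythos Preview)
Your plan for Parts 1, 2, and 4 is essentially the paper's. For Part~1 the paper traps the two branches of $S$ in the open rectangles $K_L=(0,A)\times(0,B)$ and $K_R=(A,\infty)\times(B,\infty)$ rather than in the nullcline regions, but the sign analysis and the Poincar\'e--Bendixson argument are the same. Part~2 is the same logarithmic integration, and Part~4 is the same slow--fast reduction (the paper in fact does not address the boundary degeneracies you flag).

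The gap is in Part~3. You correctly set up the linear variational equation $v'=a(x)\,v+b(x)$ for $v=\partial_\xi s$ with initial value $v(A)=C=\partial_\xi B - s'(A)\,\partial_\xi A$, but you never identify the step that actually fixes the sign of $v$: one must compute the inhomogeneous term
\[
b(x,\xi)=\frac{\partial}{\partial\xi}\,h\bigl(x,s(x),\xi\bigr)
\]
explicitly along the separatrix and show it has a definite sign on each of $(0,A)$ and $(A,\infty)$. The paper does exactly this and then reads the sign of $v$ off the closed-form solution
\[
v(x)=\exp\!\Bigl(\int_A^x a\Bigr)\Bigl(C+\int_A^x b(u,\xi)\,\exp\!\Bigl(-\int_A^u a\Bigr)\,du\Bigr).
\]
For $\xi=\alpha$ one finds $b>0$ on $(0,A)$ and $b<0$ on $(A,\infty)$; together with $C<0$ both summands in the bracket are negative for every $x>0$. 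A ``Gronwall-type sign-preservation'' from $v(A)<0$ alone does not suffice, because an inhomogeneous term of the wrong sign could flip $v$; your alternative comparison argument via the monotone structure is viable for (a) and (b) but does not extend to~(c).

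This omission is decisive for (c). There $C=0$, so the sign of $v$ is carried entirely by the integral of $b$. The paper computes
\[
b(x,\delta)=\frac{s(x)\bigl(1-\beta x-s(x)\bigr)}{x\bigl(1-x-\alpha s(x)\bigr)}>0\quad\text{for all }x>0,\ x\neq A,
\]
from which $v<0$ on $(0,A)$ and $v>0$ on $(A,\infty)$ follow immediately. Your proposed substitutes do not close the argument: ``bootstrapping from $s^*$ via the integral equation'' is not a proof because \eqref{eqIntegralS} is implicit in $s$, and the ``leading nontrivial term of the variational expansion at $x=A$'' would at best determine the sign of $v$ near $A$, not on the whole half-line. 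Once $b$ is computed, (c) is in fact the easiest of the three cases, not the hardest.
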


Next, we present the biological interpretation and implications of Theorem \ref{teoMain} from the ecological resilience perspective, focusing on the measures of \textit{precariousness} and \textit{latitude}.

\subsection{Quantifying resilience}

Theorem \ref{teoMain} provides a way to quantify two resilience measures, the \textit{precariousness} and the \textit{latitude}, for the native population. The \textit{precariousness} of an initial state inside a given basin of attraction is a resilience measure quantifying the minimal perturbation required to drive the system trajectory starting at that state to another basin of attraction \cite{walker2004resilience,fassoni2017ecological}. In our context, the precariousness of an initial density of native individuals $x_0 \in [0,1]$, denoted as $Pr(x_0)$, is the minimal density $y_{\min}$ of invader individuals which is able to successfully invade the environment, i.e., to survive and completely eliminate the native population. Mathematically, the precariousness $Pr(x_0)=y_{\min}$ is the minimum value such that the initial condition $(x_0,y_{\min})$ belongs to the basin of attraction of $P_I$. According to Theorem \ref{teoMain}, item 1, and the above definition, we conclude that the precariousness $Pr(x_0)$ is given by the value of the function $s(x)$ whose graph defines the separatrix $S$, evaluated at $x_0$, i.e.,
\[
Pr(x_0)=s(x_0).
\]
From Theorem \ref{teoMain}, item 1 d, we see that the precariousness $Pr(x_0)$ is an increasing function of $x_0$, i.e., the higher is the initial density $x_0$ of native individuals, the higher is the density of invading individuals $s(x_0)$ required to lead to a successful invasion.

The \textit{latitude} of a stable equilibrium point is a resilience measure quantifying the area or volume of its basin of attraction, restricted to a larger set of all eventual initial conditions that might be set by random state-space perturbations \cite{walker2004resilience,fassoni2017ecological}. Thus, the latitude measures the probability that a random initial condition taken within a predefined set of plausible initial conditions is inside the desired basin of attraction. In our context, the desired basin of attraction is ${\cal B} (P_N)$, corresponding to the survival of the native population. The predefined set will be considered as $K=[0,1]\times [0,1]$, meaning that the possible initial conditions $(x_0,y_0) \in K$ correspond to initial densities smaller than the maximum density of each population, $0\leq x_0, y_0 \leq 1$. Thus, the latitude of $P_N$ is the ratio between the areas of ${\cal B} (P_N) \cap K$ and $ K$. Since ${\rm area } (K)=1$, we conclude from Theorem \ref{teoMain}, item 1, that
\[
L(P_N) = {\rm area } ({\cal B} (P_N) \cap K) = \int_0^1 s(x) dx.
\]

In summary, the conclusion given in Theorem \ref{teoMain}, item 1, that the separatrix $S$ is a graph of a function $y=s(x)$, provides an useful way to express the resilience measures \textit{precariousness} and \textit{latitude} in terms of the function $s(x)$. Next, we will illustrate how changes in the model parameters lead to changes in $s(x)$ and, therefore, in these resilience measures.

\subsection{A model function for the separatrix}

Theorem \ref{teoMain}, item 2, provides an approximation $s^*(x)$ for the separatrix equation $y=s(x)$. In Figure \ref{FigApp}, we see that the approximation is very accurate for different combinations of parameter values. More important than the approximation quality itself is the fact that the expression of $s^*(x)$ provides insights on the behavior of $s(x)$ with respect to the parameters and, thus, allows to speculate about their influence on the resilience measures quantified above.

\begin{figure}[!htb]
\begin{center}
\includegraphics[width=0.32\linewidth]{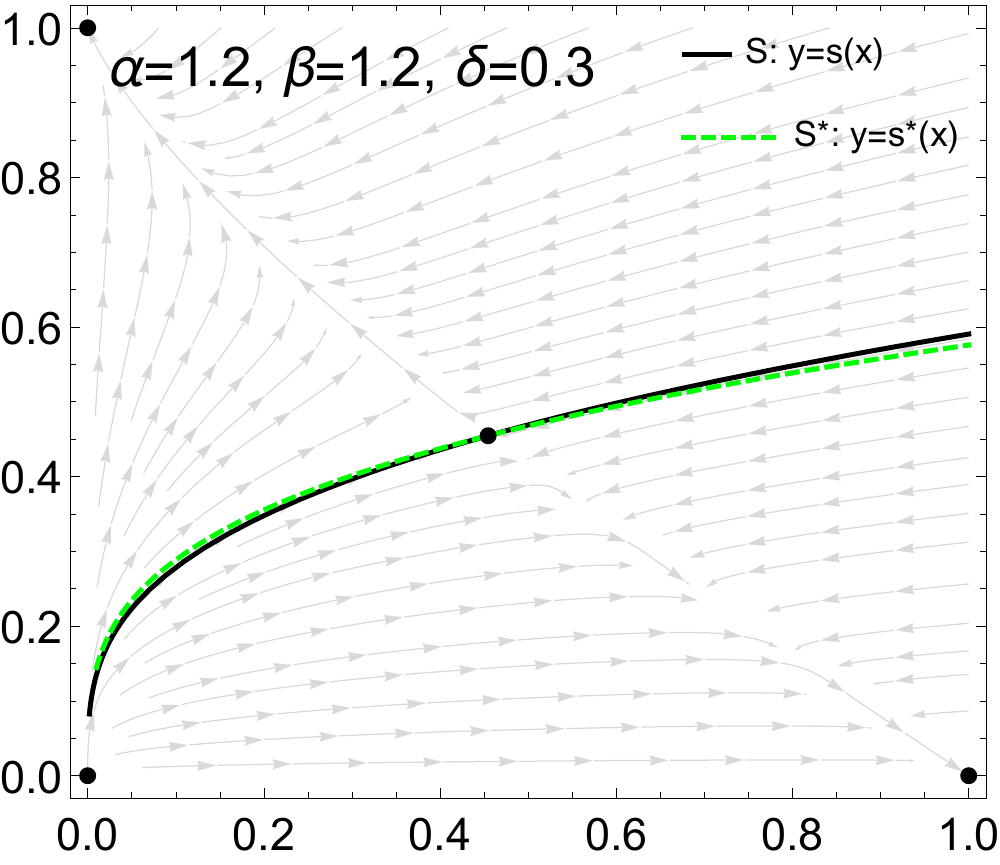}
\includegraphics[width=0.32\linewidth]{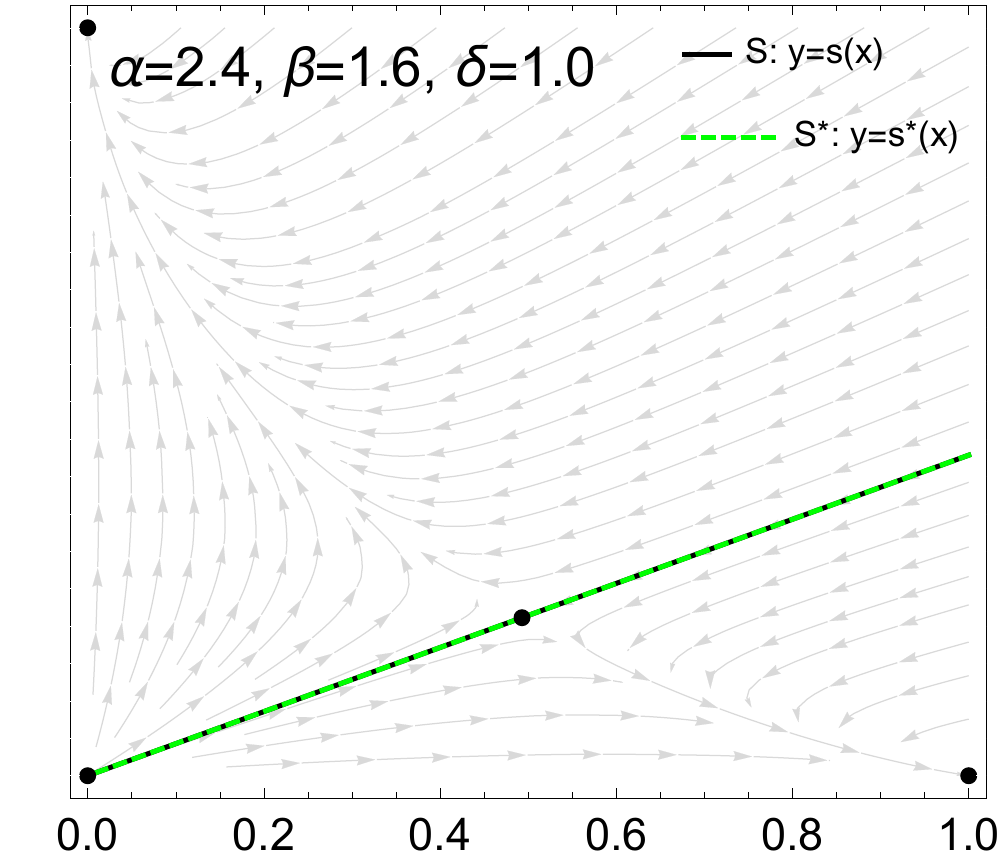}
\includegraphics[width=0.32\linewidth]{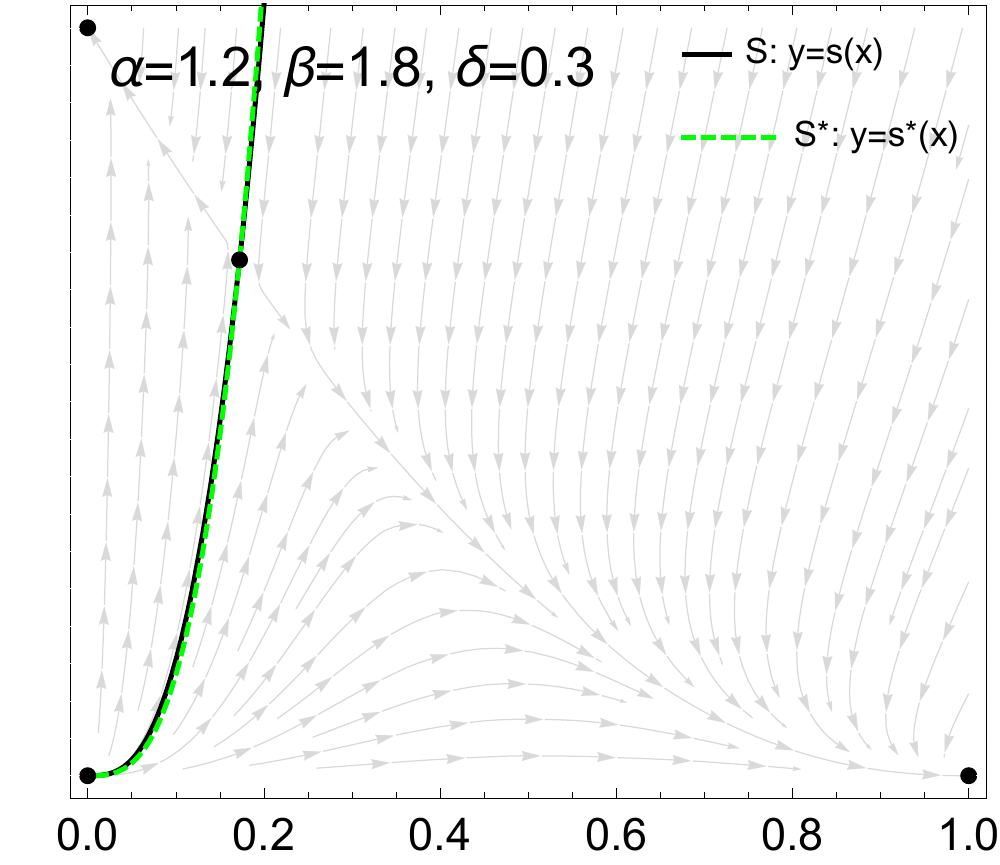}
\caption{Phase portrait of system \eqref{siscompeticao} for different parameter values. The equilibrium points are indicated by black dots. The separatrix $S$, which is the graph of $s(x)$ (black line), is approximated by $s^*(x)=B(x/A)^\delta$ (green line), which also passes trough the point $P_C=(A,B)$. For $\delta=1$ (middle panel), $s(x)$ and $s^*(x)$ coincide and are a straight line.
\label{FigApp}}
\end{center}
\end{figure}

With respect to parameter $\delta$, we note that the function $s^*(x)=B(x/A)^\delta $ is a power in $x$ with exponent $\delta$ and coefficient $B/A^\delta$ and its graph passes through the point $P_C=(A,B)$. Thus, the parameter $\delta$ plays a role similar to the exponent $d$ in the monomial $x^d$. For small values, $0< \delta <1$, the graph of $s^*(x)$ is concave-down and similar to the graph of a $n$-th root. For $\delta=1$, $s^*(x)$ represents a straight line. For high values, $\delta >1$, the graph of $s^*(x)$ is concave-up and, the higher is $\delta$, the smaller is $s^*(x)$ for $x<A$ and the higher is $s^*(x)$ for $x>A$.

To investigate the behavior of $s^*(x)$ with respect the parameters $\alpha$ and $\beta$, we observe that $A=(\alpha-1)/(\alpha\beta-1)$ and $B=(\beta-1)/(\alpha\beta-1)$. Differentiating these expressions with respect to $\alpha$ and $\beta$, we assess the influence of model parameters on the position of the point $(A,B)$ and therefore on the position of $s^*(x)$. We have
\begin{equation}
\dfrac{\partial}{\partial \alpha} \left( A \right) = \dfrac{\beta-1}{(\alpha\beta-1)^2}>0, \ \ \
\dfrac{\partial}{\partial \alpha} \left( B \right) = - \dfrac{\beta(\beta-1)}{(\alpha\beta-1)^2}<0
\label{eqDerivAparam}
\end{equation}
and
\begin{equation}
\dfrac{\partial}{\partial \beta} \left( A \right) = -\dfrac{\alpha(\alpha-1)}{(\alpha\beta-1)^2}<0, \ \ \
\dfrac{\partial}{\partial \beta} \left( B \right) = \dfrac{\alpha-1}{(\alpha\beta-1)^2}>0.
\label{eqDerivBparam}
\end{equation}
Thus, when parameter $\alpha$ increases, the point $P_C=(A,B)$ moves to the right and down, so that we expect that $s^*(x)$ will be smaller for all $x>0$. Analogously, when parameter $\beta$ increases, the point $P_C=(A,B)$ moves to the left and up, so that it is expected that, for all $x>0$, $s^*(x)$ will increase in comparison with the previous value of $\beta$.

In summary, the observed behavior of $s^*(x)$ with respect to the parameters, as described above, raises the question whether $s(x)$ has the same properties. Item 3 of Theorem \ref{teoMain} states that these properties are shared by $s(x)$ and we will see in the following that this fact has implications on the behavior of the resilience measures with respect to parameters changes. Therefore, the approximation $s^*(x)$ provides a model function for the separatrix which allows us to infer the behavior of $s(x)$ and of the resilience measures quantified above.

\subsection{Effects of parameters on resilience}

Theorem \ref{teoMain}, item 3, describes the behavior of the separatrix $S$ with respect to changes in the parameters. Such dependence is emphasized by writing $y=s(x,\xi)$, where $x\geq 0$ and $\xi\in \{\alpha,\beta,\delta\}$ is one of the model parameters.

\begin{figure}[!htb]
\begin{center}
\includegraphics[width=0.49\linewidth]{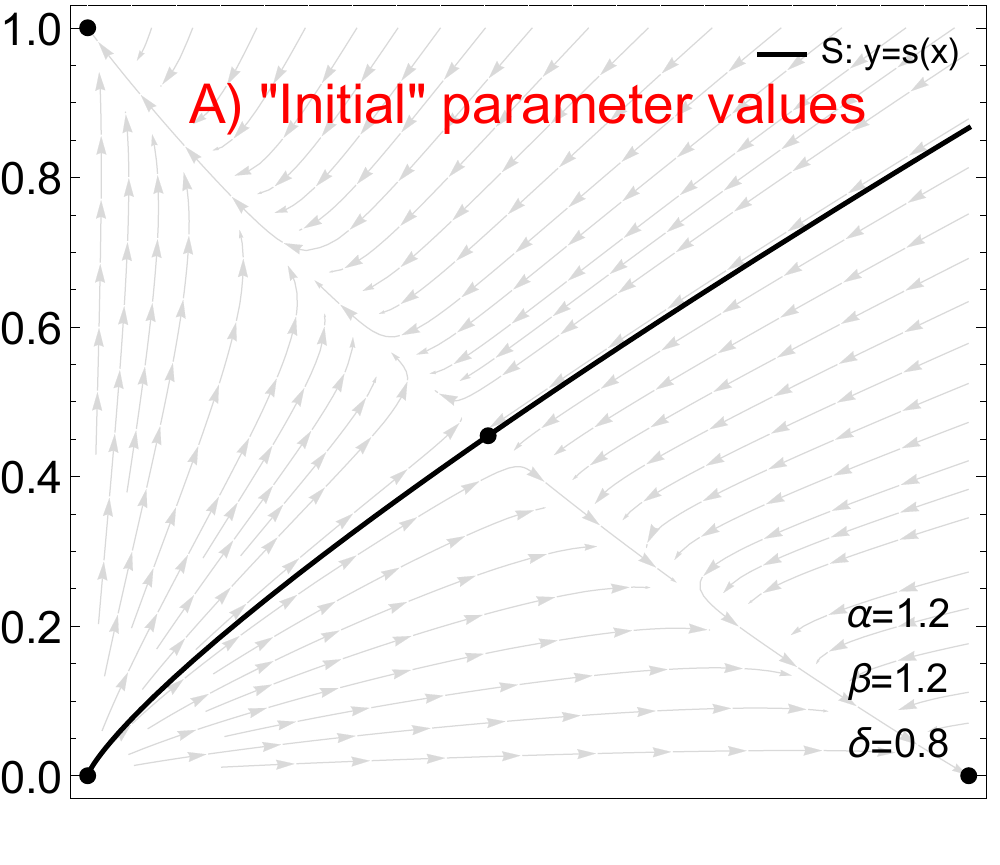}
\includegraphics[width=0.49\linewidth]{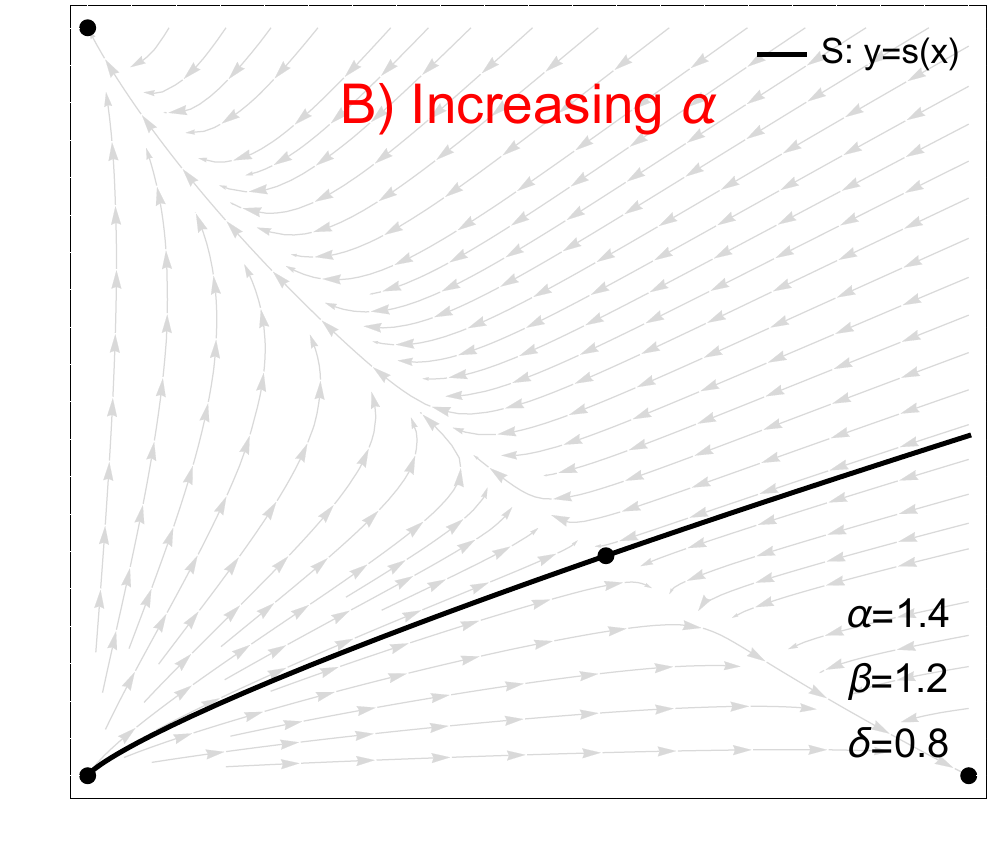}
\includegraphics[width=0.49\linewidth]{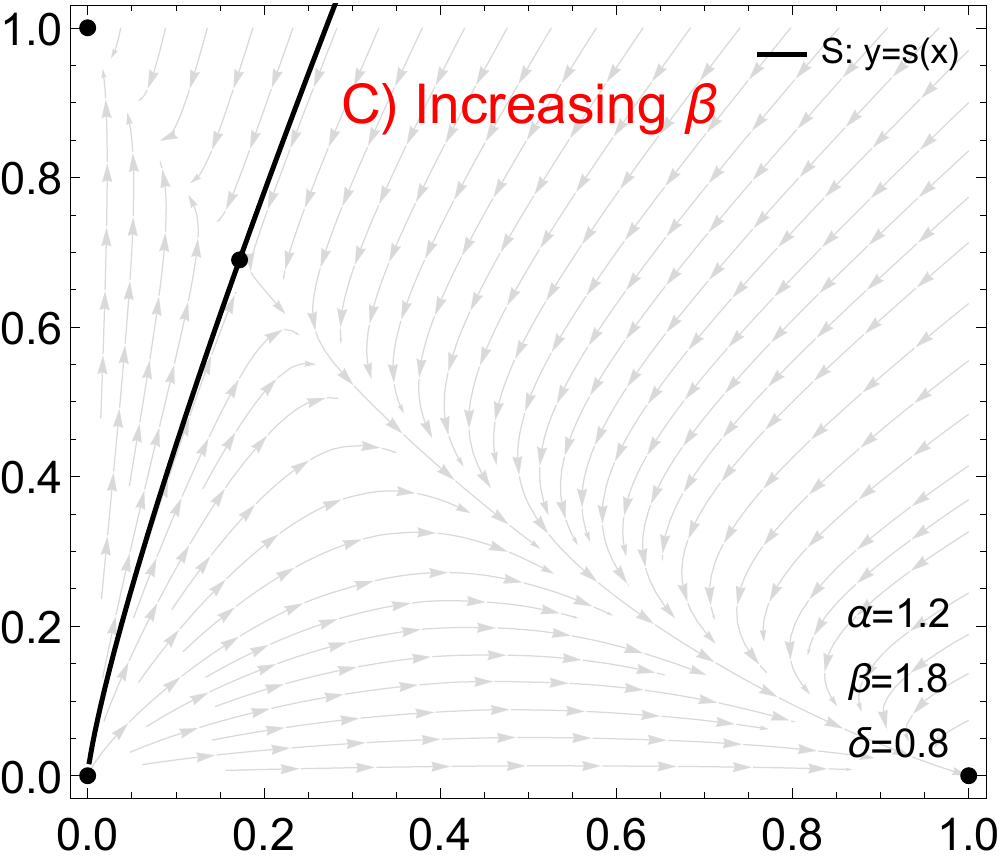}
\includegraphics[width=0.49\linewidth]{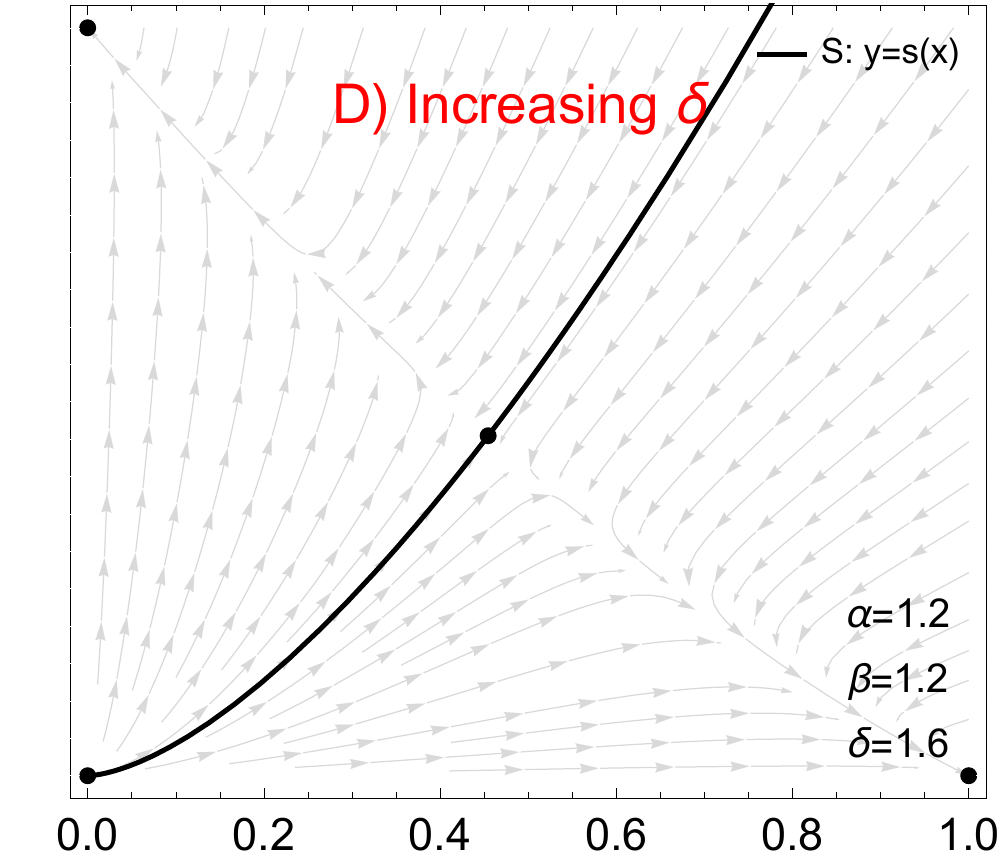}
\caption{Phase portraits of system \eqref{siscompeticao} for different values of parameter. Panel A uses a ``standard'' set of parameter values and each panel illustrate the behavior of the separatrix $S$ as one of the parameter increases while the other remain with the standard values. When the competitiveness $\alpha$ of the invader population increases, the separatrix $S$ moves down, thereby reducing the threshold for invasion and the area of the basin of attraction of $P_N$, which corresponds to the survival of the native population (compare panels A and B). Similarly, when the competitiveness $\beta$ of the native population increases, the separatrix $S$ moves up, thereby increasing the threshold for invasion and the area of the basin of attraction of $P_N$ (compare panels A and C). Finally, when the ratio $\delta=r_I/r_N$ between the reproduction rates increases, the shape of the separatrix changes, similarly to the behavior of the curve $y=x^d$ when the exponent $d$ increases (compare panels A and D); note that the position of $P_C=(A,B)$ does not depends on $\delta$.
\label{FigParamRes}}
\end{center}
\end{figure}

First, we see that $s(x,\alpha)$ is a decreasing function with respect to $\alpha$, for each fixed $x>0$. It means that the separatrix moves down when $\alpha$ increases (Figure \ref{FigParamRes}, panels A and B). This has immediate implications on the resilience measures for the native population. Remember that $\alpha$ represents the competitiveness of the invader population against the native. Also, the precariousness of a given initial number of native individuals is $Pr(x_0)=s(x_0,\alpha)$ and the latitude is given by $\int_0^1s(x,\alpha)dx$. Therefore, we conclude that when the competitiveness $\alpha$ of the invader population increases, both resilience measures (precariousness and latitude) of the native decreases, i.e., the more competitive is the invader population, the lower are the number of invader individuals required to lead to a successful invasion and the higher is the probability that a random initial condition leads to the extinction of the native population. Similarly, since $s(x,\beta)$ is increasing with respect to $\beta$, for fixed $x>0$, it follows that the higher is the competitiveness $\beta$ of the native population, the higher are its resilience measures against the invader population. Therefore, we conclude that higher competitiveness favors resilience, which is already expected.

Now, assessing the behavior of $S$ with respect to $\delta$, we obtain interesting conclusions. For each $x_0<A$ fixed, $s(x_0,\delta)$ is decreasing with respect to $\delta$. Since  $\delta= r_I/r_N$, it means that the precariousness $Pr(x_0)=s(x_0,\delta)$ of an initial density of native individuals decreases when the reproduction rate $r_I $ of the invader population increases or when the reproduction rate $r_N$ of the native population decreases. On the contrary, the precariousness $Pr(x_0)=s(x_0,\delta)$ increases when $r_I$ decreases or $r_N$ increases. In other words, the higher is the reproduction rate of the native population, the more protected the native population against an invasion. Or, in order to increase the chances of a successful invasion, the invader population should increase its reproduction rate. In summary, within a \textit{pioneering} context, where the initial densities of both population are small ($0<x_0<A$, $0<y_0<B$), it is advantageous to each population to increase its reproduction rate in order to survive the competition with the other population.

In the other case, i.e., for high initial densities $x_0>A$, the behavior is opposite. The value of $Pr(x_0)=s(x_0,\delta)$ is increasing with respect to $\delta= r_I/r_N$. Therefore, if the native population increases its reproduction rate, it decreases its protection against an invasion, since the number of invader individuals required to a successful invasion is smaller. From the perspective of the invader population, the invasion may be facilitated if it decreases its reproduction rate $r_I$. Thus, within a \textit{invasion} context, where the initial density of the native population is high, it is advantageous to each population to decrease its reproduction rate in order to survive. This counterintuitive conclusion may be explained as follows. Besides the interspecific competition against the other population, each population has a intraspecific competition between its individuals. Thus, within a context of high densities and high interspecific competition, the intraspecific competition is an additional limiting factor for the population survival and may be reduced if the population decreases its reproduction rate, leading to a higher chance of survival for the initial density of such population.

\subsection{The basin boundary in the extreme cases}

Theorem \ref{teoMain}, item 4, regards the behavior of $S$ as a function of parameter $\delta$ in the extreme cases, i.e., when $\delta\to 0$ and $1/\delta \to 0$, thereby allowing us to extend the above conclusions to these cases. See Figure \ref{FigAsymp}.

\begin{figure}[!htb]
\begin{center}
\includegraphics[width=0.49\linewidth]{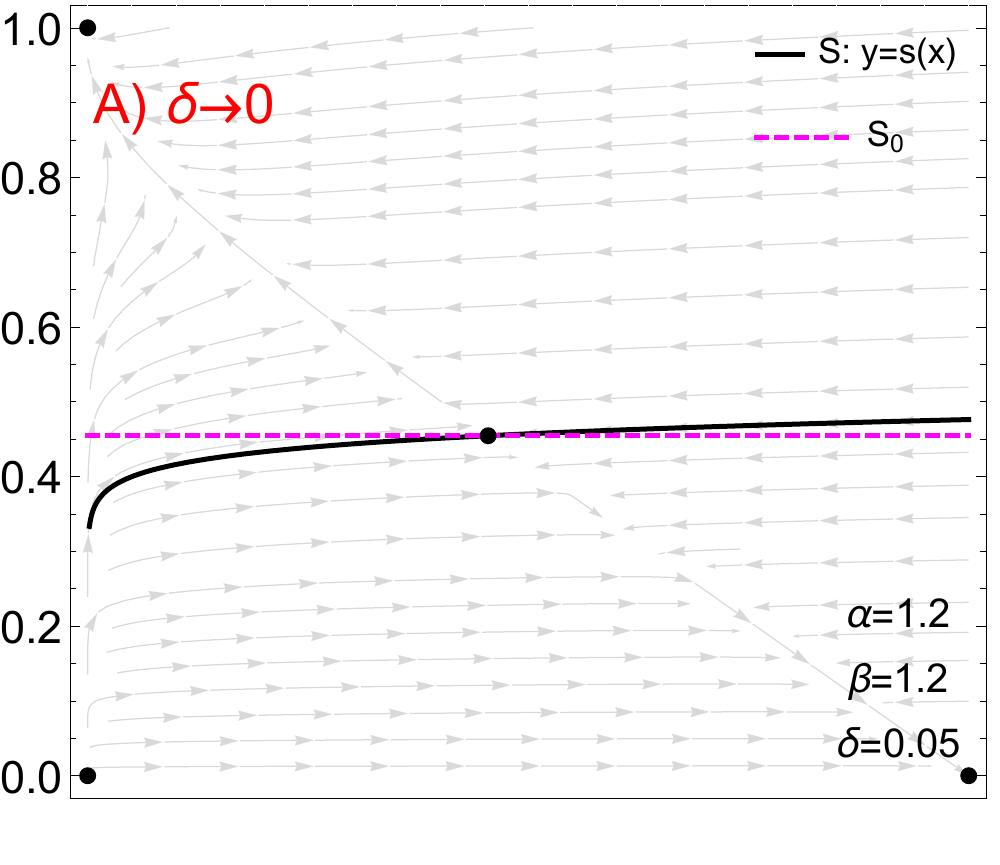}
\includegraphics[width=0.49\linewidth]{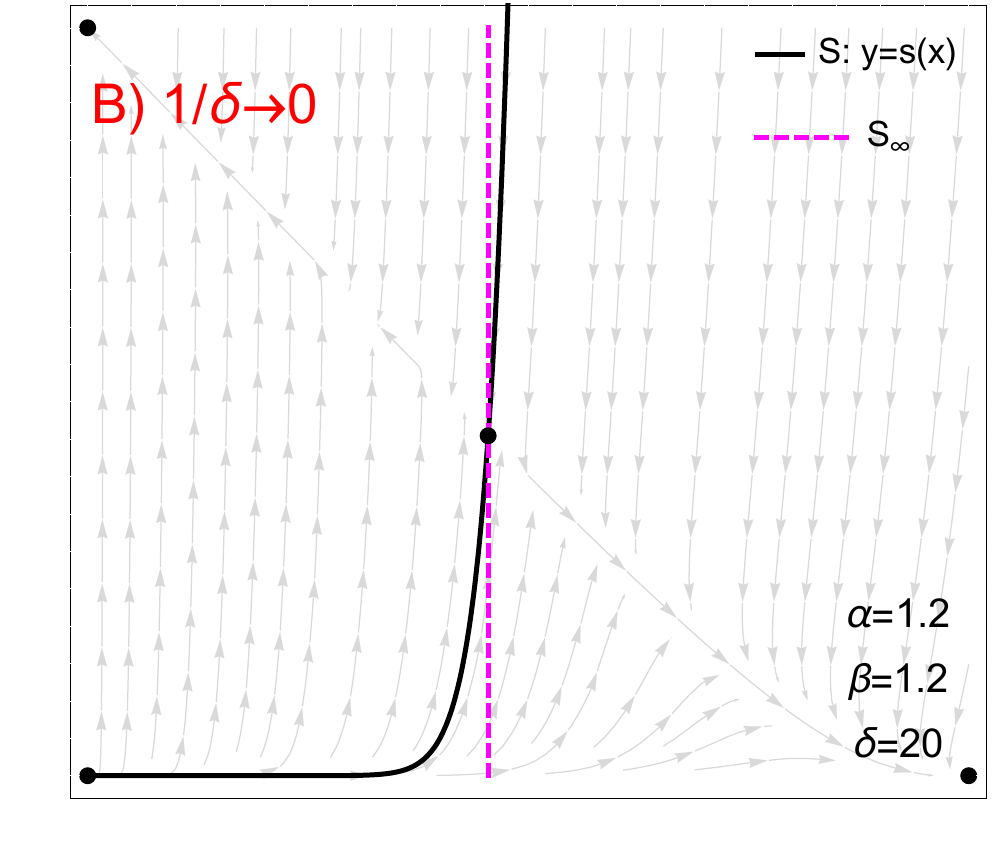}
\caption{Phase portrait of system \eqref{siscompeticao} in the extreme cases, A) $\delta \to 0$ and B) $1/\delta \to 0$ (right). A) When $\delta \to 0$, the separatrix $S$ converges to $S_0$, which is the horizontal line $y=B$ (dashed purple). B) When $1/\delta \to 0$, the separatrix $S$ converges to $S_\infty$, which is the vertical line $x=A$ (dashed purple).
\label{FigAsymp}}
\end{center}
\end{figure}

The case $\delta\to 0$ corresponds to $r_I/r_N \to 0$. It means that the reproduction of the invader population occurs on a much slower time scale in comparison with the reproduction of the native population. When it happens, the separatrix $S$ approaches the horizontal line $y=B$ and so the precariousness $Pr(x_0)$ becomes constant, $Pr(x_0)=B$. In the invasion context, i.e., for $x_0>A$, $Pr(x_0)$ is the minimum value possible among the others when $\delta$ varies. This means that the invader population maximizes it chances of successful invasion when it drastically reduces its reproduction rate in comparison with the native population. It can also be understood from the native point of view: if the native population drastically increases its reproduction rate in comparison with the invader, then it has the least chances to survive the invasion. In the pioneering context, i.e., $x_0<A$, the result is opposite. A very fast reproduction of the native population in comparison to the invader leads to the highest chance for its establishment within the environment.

Finally, the case $1/ \delta = r_N/r_I \to 0$ means that the native population reproduces on a much slower time scale in comparison with the invader population. In this case and within the invasion context, the precariousness $P_r(x_0)$ becomes infinity. This leads to the interesting conclusion that the native population cannot be invaded, whatever be the initial density of invaders, provided that the native reproduces on a much slower time scale and has a sufficient high initial density, $x_0>A$. In other words, during a single generation of the native population, several generations of the invader pass; these are subject to the competitiveness with both the native and its own individuals and are not able to survive and eliminate the native individuals during their short lifetime.

\section{Methods}
\label{secMet}

In this section, we demonstrate Theorem \ref{teoMain}. We start by introducing the notation and preliminary results. The vector field associated with system \eqref{siscompeticao} will be denoted as
\[
F(x,y)=\left(f(x,y), g(x,y) \right) = \left( x\left(1-x-\alpha y\right) , \delta y   \left(1-y-\beta x\right) \right).
\]
The biologically relevant phase space is restricted to the first quadrant $\mathbb{R}^2_+=\lbrace (x,y) : x \geq 0, y\geq 0\rbrace$. For each point $(x,y)\in \mathbb{R}^2_+$, the solution of system \eqref{siscompeticao} starting at $(x,y)$ will be denoted as $\phi(t,x,y)$. We say that a function $x\mapsto y(x)$ is smooth if it is real analytic.

We start with two propositions stating the configuration of the phase portrait of system \eqref{siscompeticao} in the bistable regime (see Figure \ref{FigMain}). The proofs of these results are straightforward and were presented in several previous references \cite{hofbauer1998evolutionary,fassoni2014basins}. For sake of completeness, we present it here, but in a condensed way.
\begin{proposition}
\label{propIntroEq}
Under conditions $\alpha,\beta>1$ and $\delta>0$, the equilibrium points of system \eqref{siscompeticao} are:
\begin{description}
\item[1.] The trivial equilibrium point $P_0=(0,0)$, which is an unstable node;
\item[2.] The ``native-wins'' equilibrium point $P_N=(1,0)$, which  is a stable node;
\item[3.] The ``invader-wins'' equilibrium point $P_I=(0,1)$, which  is a stable node;
\item[4.] The coexistence equilibrium point $P_C=(A,B)$, which  is a saddle-point, where $A=(\alpha-1)/(\alpha\beta-1)$ and $B=(\beta - 1)/(\alpha\beta - 1)$.
\end{description}
\end{proposition}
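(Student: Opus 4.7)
The plan is to find all solutions of $F(x,y)=0$ in $\mathbb{R}^2_+$ and then classify each one by linearization. First I would solve the algebraic system $x(1-x-\alpha y)=0$, $\delta y(1-y-\beta x)=0$ by the standard case analysis on which factor in each equation vanishes. Three of the four cases are trivial: $(x,y)=(0,0)$ gives $P_0$; $x=0$ with $1-y-\beta x=0$ gives $P_I=(0,1)$; and $y=0$ with $1-x-\alpha y=0$ gives $P_N=(1,0)$. The remaining case, where both interior factors vanish, reduces to a $2\times 2$ linear system whose unique solution is $P_C=(A,B)$ with $A=(\alpha-1)/(\alpha\beta-1)$ and $B=(\beta-1)/(\alpha\beta-1)$; the hypothesis $\alpha,\beta>1$ guarantees $A,B\in(0,1)$ so that $P_C$ lies in the interior of the biologically relevant quadrant.

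Next I would compute the Jacobian
$$J(x,y)=\begin{pmatrix} 1-2x-\alpha y & -\alpha x \\ -\delta\beta y & \delta(1-2y-\beta x)\end{pmatrix}$$
and evaluate it at each equilibrium. At $P_0$ the matrix is diagonal with eigenvalues $1$ and $\delta$, both positive, so $P_0$ is an unstable node. At $P_N=(1,0)$ and $P_I=(0,1)$ the Jacobian is triangular, so the eigenvalues read off directly from the diagonal: $\{-1,\,\delta(1-\beta)\}$ and $\{1-\alpha,\,-\delta\}$ respectively. The hypotheses $\alpha,\beta>1$ and $\delta>0$ immediately make all four of these eigenvalues negative, which classifies $P_N$ and $P_I$ as stable nodes.

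For $P_C$ the key simplification is to use the equilibrium equations $1-A-\alpha B=0$ and $1-B-\beta A=0$ to rewrite $1-2A-\alpha B=-A$ and $1-2B-\beta A=-B$, yielding
$$J(P_C)=\begin{pmatrix} -A & -\alpha A \\ -\delta\beta B & -\delta B \end{pmatrix}.$$
Its determinant is $\delta A B(1-\alpha\beta)$, which is strictly negative because $A,B,\delta>0$ and $\alpha\beta>1$. A negative determinant of the linearization is precisely the defining algebraic condition for a hyperbolic saddle, completing the classification.

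I do not expect any serious obstacle: the proof is essentially a checklist. The only points that require minor care are (i) ensuring the case analysis for $F=0$ exhausts all combinations without introducing spurious solutions, and (ii) making sure that the strict inequalities $\alpha>1$, $\beta>1$, $\delta>0$ are invoked at the correct steps so that every eigenvalue and the determinant at $P_C$ acquire the required sign.
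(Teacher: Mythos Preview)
Your proposal is correct and follows essentially the same approach as the paper: solve $F(x,y)=0$ to list the equilibria, then linearize via the Jacobian at each point. The paper's proof is actually less detailed than yours---it merely states that the first three equilibria are classified directly and that $P_C$ is handled via the signs of the determinant and trace---so your explicit computations (in particular the simplification $J(P_C)=\begin{pmatrix}-A & -\alpha A\\ -\delta\beta B & -\delta B\end{pmatrix}$ and the observation that $\det J(P_C)<0$ alone suffices for the saddle classification) are a faithful fleshing-out of the same argument.
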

\begin{proof}
Solving the nonlinear system $F(x,y)=(0,0)$ we find the four equilibrium points $P_0$, $P_N$, $P_I$ and $P_C$. The conditions $\alpha>1$ and $\beta>1$ imply that $A>0$ and $B>0$. Calculating the Jacobian matrix of $F$ at each of these points, the stability is easily assessed for the first three equilibria. The stability of $P_C$ is assessed by analyzing the signs of the determinant and trace of the Jacobian matrix evaluated at $P_C$.
\end{proof}

\begin{proposition}
\label{propIntroSols}
Under conditions $\alpha,\beta>1$ and $\delta>0$, the phase portrait of system \eqref{siscompeticao} \emph{(}restricted to the phase space $\mathbb{R}^2_+$\emph{)} has the following properties:
\begin{description}
\item[1.] System \eqref{siscompeticao} has no limit cycles;
\item[2.] All $\omega$-limit sets of system \eqref{siscompeticao} are contained in the box $K=[0,1]\times [0,1]$, i.e., all trajectories satisfy $x(t)\leq 1$, $y(t)\leq 1$ for $t\to \infty$;
\item[3.] All trajectories of system \eqref{siscompeticao} converge to one of the equilibrium points, i.e., the phase space $\mathbb{R}^2_+$ is decomposed as the following disjoint union,
\[
\mathbb{R}^2_+ = \lbrace P_0 \rbrace \cup S \cup {\cal B}_N \cup {\cal B}_I,
\]
where ${\cal B}_N $ and $ {\cal B}_I$ are the basins of attraction of $P_N$ and $P_I$, respectively, and $S$ is the one-dimensional stable manifold of $P_C$.
\end{description}
\end{proposition}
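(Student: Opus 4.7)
The plan is to establish the three items sequentially using classical tools from planar dynamics, writing $D=\{(x,y):\, x>0,\, y>0\}$ for the open first quadrant.

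For item 1, I would invoke Dulac's criterion on $D$ with the weight $B(x,y)=1/(xy)$, which is smooth on $D$. A direct computation yields
\[
\frac{\partial (Bf)}{\partial x}+\frac{\partial (Bg)}{\partial y}=\frac{\partial}{\partial x}\!\left(\frac{1-x-\alpha y}{y}\right)+\frac{\partial}{\partial y}\!\left(\delta\frac{1-y-\beta x}{x}\right)=-\frac{1}{y}-\frac{\delta}{x}<0
\]
throughout $D$, which rules out closed orbits lying in $D$. On each coordinate axis the system reduces to a one-dimensional logistic equation with two equilibria, so no periodic orbit lies on the boundary either; hence system \eqref{siscompeticao} has no limit cycle in $\mathbb{R}^2_+$.

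For item 2, I would show that the box $K=[0,1]\times[0,1]$ is positively invariant and globally attracting. Along $\{x=1,\, y\geq 0\}$ one has $f(1,y)=-\alpha y\leq 0$, along $\{y=1,\, x\geq 0\}$ one has $g(x,1)=-\delta\beta x\leq 0$, and the coordinate axes are invariant; so $K$ is positively invariant. For general initial data in $\mathbb{R}^2_+$, the differential inequalities $\dot x\leq x(1-x)$ and $\dot y\leq\delta y(1-y)$ combined with the scalar logistic comparison principle yield $\limsup_{t\to\infty}x(t)\leq 1$ and $\limsup_{t\to\infty}y(t)\leq 1$. In particular, every $\omega$-limit set is contained in $K$.

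For item 3, items 1 and 2 allow me to invoke the Poincar\'e--Bendixson theorem: the $\omega$-limit set of any forward trajectory is a nonempty compact invariant set which, in the absence of periodic orbits, must be either a single equilibrium or a union of equilibria linked by heteroclinic connections. Since Proposition \ref{propIntroEq} identifies $P_C$ as the only saddle, no heteroclinic cycle through distinct saddles can form, and a homoclinic loop at $P_C$ is excluded because the stable manifold $S$ together with the invariant axes partitions $\mathbb{R}^2_+\setminus\{P_0\}$ into two open invariant regions, each containing a unique asymptotically stable equilibrium ($P_N$ or $P_I$); each branch of the unstable manifold of $P_C$ is trapped in one of these regions and must converge to the corresponding stable node. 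Hence every $\omega$-limit set is a single equilibrium, the trajectories converging to $P_C$ form exactly the stable manifold $S$, the unstable node $P_0$ attracts only itself, and the remaining points of $\mathbb{R}^2_+$ partition into $\mathcal{B}_N$ and $\mathcal{B}_I$, giving the claimed disjoint decomposition. The main obstacle is the rigorous exclusion of a homoclinic loop at $P_C$ and the correct identification of the endpoints of the branches of its unstable manifold; a clean alternative route is to observe that planar competitive systems are monotone with respect to the reversed cone $\{(u,v):\, u\geq 0,\, v\leq 0\}$, so by the classical Hirsch--Smith theory every bounded trajectory converges to an equilibrium and neither periodic nor homoclinic orbits can exist.
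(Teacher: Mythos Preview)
Your proposal is correct and follows essentially the same approach as the paper: Dulac's criterion with the weight $1/(xy)$ for item~1, the logistic differential inequalities $\dot x\le x(1-x)$, $\dot y\le\delta y(1-y)$ with comparison for item~2, and Poincar\'e--Bendixson for item~3. You are in fact more careful than the paper's condensed proof in item~3, where you explicitly address the exclusion of homoclinic and heteroclinic graphics (the paper simply asserts convergence to equilibria); your first argument for this is slightly circular since the partitioning property of $S$ is only established later in the paper, but your alternative via monotone/competitive dynamics is a clean and self-contained fix.
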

\begin{proof}
\begin{description}
\item[1.] Setting $u(x,y)=1/xy$, we have ${\rm div}\,(u(x,y) F(x,y))<0$, which implies by the Dulac Criterion that system \eqref{siscompeticao} admits no limit cycles;
\item[2.] Noting that $dx/dt<x(1-x)$ and $dy/dt<\delta y(1-y)$ and using standard comparison principles, we obtain that all solutions of system \eqref{siscompeticao} are bounded and satisfy $x(t)\leq 1$, $y(t)\leq 1$ for $t\to \infty$;
\item[3.] From the first two facts, we conclude by the Poincar\'e--Bendixson theorem that all trajectories converge to equilibrium points. Since $P_C$ is a saddle point of a two dimensional system, its stable manifold is a one-dimensional manifold.
\end{description}
\end{proof}

We now present the proof of Theorem \ref{teoMain}, item per item.

\subsection{Proof of theorem \ref{teoMain}, item 1}
The following two lemmas will be important for characterizing the behavior of $S$ as a graph of a function $y=s(x)$. See Figure \ref{FigLemas} for an illustration of the content of such lemmas.

\begin{figure}[!htb]
\begin{center}
\includegraphics[width=0.8\linewidth]{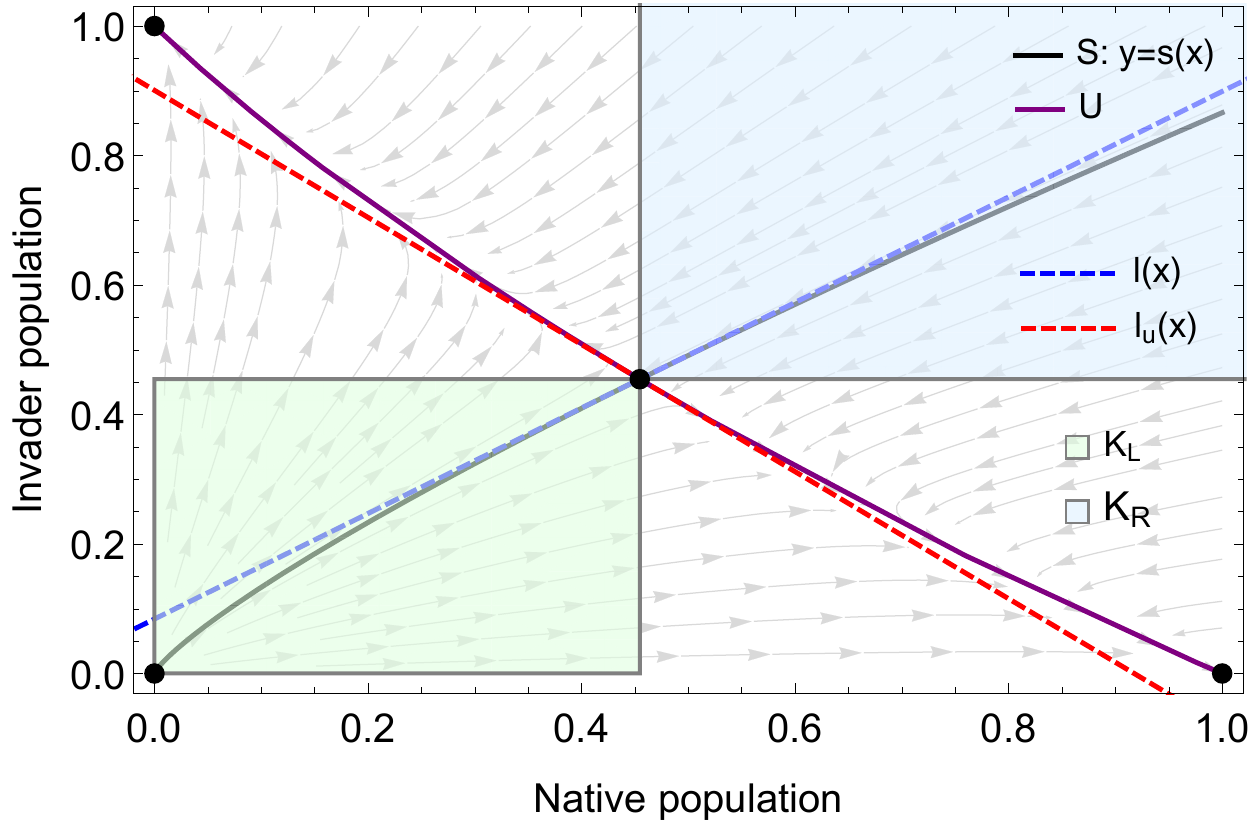}
\caption{
Illustration of Lemmas \ref{LemaReta} and \ref{LemaKs}. The stable and unstable manifolds of the saddle point $P_C=(A,B)$ are, respectively, the curves $S$ (black) and $U$ (purple). The tangent line to the stable manifold $S$ at $P_C$ is $l(x)$ (dashed blue) and tangent line to the unstable manifold $U$ at $P_C$ is $l_u(x)$ (dashed red). The sets $K_L$ (green) and $K_R$ (blue) are negatively invariant with respect to the flow of the vector field $F$ (note the direction of the vector field, indicated by the gray arrows, along the boundaries of $K_L$ and $K_R$).
\label{FigLemas}}
\end{center}
\end{figure}

\begin{lemma}
\label{LemaReta}
Assume $\alpha,\beta>1$ and $\delta>0$. Denote by $S$ and $U$ the stable and unstable manifolds of the saddle point $P_C=(A,B)$. The tangent line to the stable manifold $S$ at $P_C$ is described by the equation
\begin{equation}
y=l(x) =B+m\left(x-A\right),
\label{eqTangLine}
\end{equation}
where the positive slope $m$ is given by
\begin{equation}
m=\dfrac{\left(\sqrt{\Delta }-\eta \right)}{2\alpha (\alpha -1)},
\label{eqSlopem}
\end{equation}
with
\begin{equation}
\Delta = \kappa^2 + 4\delta (\alpha -1)(\beta -1) (\alpha  \beta -1),
\ \
\kappa =(\alpha -1)+\delta (\beta -1),
\ \
\eta =(\alpha -1)-\delta (\beta -1).
\end{equation}
Similarly, the tangent line to the unstable manifold $U$ at $P_C$ is described by the equation
\begin{equation}
y=l_u(x) =B+m_u\left(x-A\right),
\label{eqTangLineU}
\end{equation}
where the negative slope $m_u$ is given by
\begin{equation}
m_u=-\dfrac{\left(\sqrt{\Delta }+\eta \right)}{2\alpha (\alpha -1)}.
\label{eqSlopemU}
\end{equation}
\end{lemma}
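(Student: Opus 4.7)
My strategy is to invoke the stable manifold theorem at the hyperbolic saddle $P_C$: the tangent line to $S$ (respectively $U$) at $P_C$ is the eigenspace of the negative (respectively positive) eigenvalue of the Jacobian $J=DF(P_C)$. The content of the lemma is thus a concrete computation of this eigenstructure.

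First I would write down $J$, using the equilibrium relations $1-A-\alpha B=0$ and $1-B-\beta A=0$ to simplify the diagonal entries as $\partial_x f(P_C)=1-2A-\alpha B=-A$ and $\partial_y g(P_C)=\delta(1-2B-\beta A)=-\delta B$; together with $\partial_y f(P_C)=-\alpha A$ and $\partial_x g(P_C)=-\delta\beta B$, this gives $\mathrm{tr}\,J=-(A+\delta B)=-\kappa/(\alpha\beta-1)$ and $\det J=-\delta A B(\alpha\beta-1)$. Since $\det J<0$, the eigenvalues are automatically real and of opposite sign, which reconfirms that $P_C$ is a saddle.

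Next I would compute the eigenvalues. The key algebraic step is the identity
\[
(\mathrm{tr}\,J)^2 - 4\det J \;=\; \frac{\Delta}{(\alpha\beta-1)^2},
\]
which reduces to $\kappa^2-\eta^2=4\delta(\alpha-1)(\beta-1)$ combined with the definition of $\Delta$. This yields $\lambda_\pm=(-\kappa\pm\sqrt{\Delta})/(2(\alpha\beta-1))$, with $\lambda_-<0$ corresponding to $S$ and $\lambda_+>0$ corresponding to $U$.

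Finally I would read off the eigenvector slopes from the first row of $J-\lambda I$, which gives $v_2/v_1=-(A+\lambda)/(\alpha A)$. Substituting $A=(\alpha-1)/(\alpha\beta-1)$ and each $\lambda_\pm$, the factor $(\alpha\beta-1)$ cancels cleanly and produces the stated slopes $m=(\sqrt{\Delta}-\eta)/(2\alpha(\alpha-1))$ for the stable direction and $m_u=-(\sqrt{\Delta}+\eta)/(2\alpha(\alpha-1))$ for the unstable direction. Their signs follow from $\sqrt{\Delta}>|\eta|$, which is equivalent to $\Delta-\eta^2=4\delta\alpha\beta(\alpha-1)(\beta-1)>0$. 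The only real obstacle is bookkeeping in these manipulations; once the equilibrium relations are used to simplify $J$ and the three identities among $\Delta$, $\kappa$, and $\eta$ are in hand, the remainder is routine linear algebra on a $2\times2$ matrix followed by an appeal to the stable manifold theorem.
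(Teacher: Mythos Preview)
Your proposal is correct and follows essentially the same route as the paper: both compute the Jacobian at $P_C$, extract the eigenvalues $\lambda_\pm=(-\kappa\pm\sqrt{\Delta})/(2(\alpha\beta-1))$, read off the eigenvector slopes, and verify the signs via $\Delta-\eta^2=4\delta\alpha\beta(\alpha-1)(\beta-1)>0$. The only cosmetic difference is that the paper normalizes the eigenvectors to have second component $1$ and then takes $m=1/v_{11}$, whereas you solve the first row of $J-\lambda I$ directly for $v_2/v_1$; the two are equivalent.
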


\begin{proof}
Calculating the Jacobian matrix of $F(x,y)$ and evaluating it at $P_C$, we find that its eigenvalues are
\[
\lambda_1=\dfrac{-\kappa-\sqrt{\Delta}}{2(\alpha\beta-1)}
\ \ \ \
{\rm and}
\ \ \ \
\lambda_2=\dfrac{-\kappa+\sqrt{\Delta}}{2(\alpha\beta-1)}.
\]
The conditions $\alpha,\beta>1$ and $\delta>0$ imply that $\kappa>0$ and $\Delta>0$. Therefore, $\lambda_1<0<\lambda_2$. Hence, at the point $P_C$, the stable manifold $S$ is tangent to the eigenvector ${v}_1$ associated to $\lambda_1$ which can be written as
\[
{v}_1=(v_{11},1)=\left( \dfrac{\eta + \sqrt{\Delta}}{2\delta \beta (\beta-1)} ,1 \right).
\]
Thus, an equation for the tangent line to $S$ at $P_C=(A,B)$ is given by
\[
y=l(x)=B+m\left(x-A\right),
\]
where $m=1/v_{11}$ can be written as in \eqref{eqSlopem}. Noting that $(\eta + \sqrt{\Delta})(\eta - \sqrt{\Delta})=-4\alpha\beta\delta(\alpha -1)(\beta -1)<0$, we conclude that $\eta + \sqrt{\Delta}>0>\eta - \sqrt{\Delta}$, which implies that $m>0$. Similarly, an  eigenvector ${v}_2$ associated to $\lambda_2$ is given by
\[
{v}_2=(v_{21},1)=\left( \dfrac{\eta - \sqrt{\Delta}}{2\delta \beta (\beta-1)},1 \right).
\]
Thus, the equation for the tangent line to $U$ at $P_C=(A,B)$ is given by
\begin{equation}
y=l_u(x)=B+m_u\left(x-A\right),
\label{eqUTangLine}
\end{equation}
where $m_u=1/v_{21}$ is given by
\[
m_u=-\dfrac{\left(\sqrt{\Delta}+ \eta \right)}{2\alpha (\alpha -1)}
.
\]
Since $\eta + \sqrt{\Delta}>0$, we have $m_u<0$.
\end{proof}

\begin{lemma}
\label{LemaKs}
Assume $\alpha,\beta>1$, $\delta>0$ and consider the open sets \emph{(}rectangles\emph{)}
\[
K_L = (0,A) \times (0,B)
\ \
{\rm and}
\ \
K_R = (A,\infty) \times (B,\infty).
\]
Then:
\begin{description}
\item[1.] $K_L$ and $K_R$ are negatively invariant with respect the flow of $F$;
\item[2.] The components of $F$ satisfy $f(x,y)>0$ and $g(x,y)>0$ for all $(x,y)\in K_L$ and $f(x,y)<0$ and $g(x,y)<0$ for all $(x,y)\in K_R$.
\end{description}
\end{lemma}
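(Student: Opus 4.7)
\textbf{Proof plan for Lemma \ref{LemaKs}.}

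The plan is to prove item 2 first by direct algebra, and then deduce item 1 from item 2 via a standard boundary analysis. The whole argument hinges on the two identities
\[
A+\alpha B = \frac{(\alpha-1)+\alpha(\beta-1)}{\alpha\beta-1}=1, \qquad B+\beta A = \frac{(\beta-1)+\beta(\alpha-1)}{\alpha\beta-1}=1,
\]
which follow immediately from the formulas for $A$ and $B$ in Proposition \ref{propIntroEq}. These say that $P_C=(A,B)$ lies simultaneously on the nontrivial prey-isocline $\{1-x-\alpha y=0\}$ and predator-isocline $\{1-y-\beta x=0\}$.

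For item 2, fix $(x,y)\in K_L$, so $0<x<A$ and $0<y<B$. Then $x+\alpha y < A+\alpha B = 1$, so $1-x-\alpha y>0$ and hence $f(x,y)=x(1-x-\alpha y)>0$; similarly $\beta x+y < \beta A + B = 1$ gives $g(x,y)>0$. For $(x,y)\in K_R$ the strict inequalities reverse, and the same two identities give $f<0$ and $g<0$. This is item 2.

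For item 1, I would show that no forward trajectory starting outside $\overline{K_L}$ can enter $K_L$, which is equivalent (running time backwards) to saying $K_L$ is negatively invariant. The boundary $\partial K_L$ decomposes into four edges plus four corner points. On the left edge $\{0\}\times[0,B]$ and the bottom edge $[0,A]\times\{0\}$ the vector field is tangent (since $f(0,y)=0$ and $g(x,0)=0$), so by uniqueness these coordinate axes are invariant and cannot be crossed by a trajectory starting at a point with $x>0$ and $y>0$. On the open top edge $(0,A)\times\{B\}$ we use $1-B=\beta A$ to write $g(x,B)=\delta B\beta(A-x)>0$, so $F$ points strictly upward, out of $K_L$; hence a trajectory in $K_L$ cannot reach this edge in backward time. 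Similarly on the open right edge $\{A\}\times(0,B)$ we have $f(A,y)=A\alpha(B-y)>0$, so $F$ points strictly rightward, out of $K_L$. The four corners $(0,0)$, $(A,0)$, $(0,B)$ and $P_C=(A,B)$ are equilibria or points on invariant axes and thus unreachable in finite backward time from a point of $K_L$ by uniqueness. Combining these observations shows that a trajectory starting in $K_L$ cannot cross $\partial K_L$ for any $t\le 0$, proving $K_L$ is negatively invariant. The argument for $K_R$ is analogous: the only finite boundary pieces are $\{A\}\times(B,\infty)$ and $(A,\infty)\times\{B\}$, on which the same identities give $f(A,y)=A\alpha(B-y)<0$ and $g(x,B)=\delta B\beta(A-x)<0$, so $F$ points outward of $K_R$; together with boundedness of trajectories from Proposition \ref{propIntroSols}, item 2, this prevents any backward exit.

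The only subtle step is handling the corner $P_C$: a backward trajectory from $K_L$ could in principle approach $P_C$ as $t\to -\infty$ only along the unstable manifold of $P_C$, but that possibility does not violate invariance since $P_C$ is never reached in finite time; this is where I would appeal to uniqueness to close the argument cleanly rather than to any quantitative estimate.
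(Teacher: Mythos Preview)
Your proof is correct and follows essentially the same route as the paper: both arguments rest on the identities $A+\alpha B=1$ and $B+\beta A=1$, compute $g(x,B)=\delta\beta B(A-x)$ and $f(A,y)=\alpha A(B-y)$ on the interior edges, and combine these sign conditions with the invariance of the coordinate axes to conclude negative invariance. The paper proves item~1 before item~2 but otherwise the computations are identical; your added discussion of the corner points is more detailed than strictly necessary but harmless. Two minor remarks: the nullclines here are for competing species, not prey and predator; and your appeal to Proposition~\ref{propIntroSols}, item~2, for $K_R$ is unneeded, since $K_R$ is unbounded and a backward trajectory that runs off to infinity (or blows up in finite backward time) does not leave $K_R$.
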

\begin{proof}
\begin{description}

\item[1.] Note that, along the horizontal line $y=B$, the vertical component of $F$ is
\[
g(x,B)=\delta\beta B(A-x),
\]
while along the vertical line $x=A$, the horizontal component of $F$ is
\[
f(A,y)=\alpha A(B-y).
\]
These equations imply that, at each segment of the boundaries of $K_L$ and $K_R$ within the lines $x=A$ and $y=B$, the vector field $-F$ points inwards $K_L$ and $K_R$. Furthermore, the left and the bottom side of the boundary of $K_L$ are contained in the $x$ and $y$-axis which are invariant. Therefore, $K_L$ and $K_R$ are positively invariant with respect to the flow of $-F$, i.e., $K_L$ and $K_R$ are negatively invariant with respect to the flow of $F$;

\item[2.] For $(x,y)\in K_L$, i.e., $0<x<A$ and $0<y<B$, we have
\[
x+\alpha y<A+\alpha B =1
,\
y+\beta x <B+\beta A = 1,
\]
which imply that $1-x-\alpha y>0$ and $1-y-\beta x>0$. Thus, $f(x,y)>0$ and $g(x,y)>0$ for $(x,y) \in K_L$. Analogously, the reverse inequalities hold for $(x,y)\in K_R$.
\end{description}
\end{proof}

The following theorem characterizes $S$ as a union of heteroclinic connections (item 1), as a graph of a function $y=s(x)$ (item 2) and as the boundary between the basins of attraction (item 3). See Figure \ref{FigMain} for an illustration.

\begin{theorem}
\label{teoremaS}
Assume $\alpha,\beta>1$, $\delta>0$. Let $\hat{S}=S\cup\{P_0\}$. Then:
\begin{description}
\item[1.] The stable manifold $S$ of $P_C$ is the disjoint union
\begin{equation}
S= S_L \cup \lbrace P_C \rbrace \cup S_R,
\label{eqSetsS}
\end{equation}
where the left branch $S_L$ is a heteroclinic connection from $P_0$ to $P_C$ and the right branch $S_R$ is a heteroclinic connection from a point at the infinity to $P_C$. Moreover, $S_L\subset K_L$ and  $S_R \subset K_R$;
\item[2.] For each $x\geq 0$, there is a unique $y=s(x)$ such that $(x,s(x)) \in \hat{S}$, i.e., $\hat{S}$ is a graph of a function whose domain in the whole nonnegative $x$-axis. The function $s(x)$ is smooth;
\item[3.] $\hat{S}$ is the boundary, or separatrix, between the basins of attraction ${\cal B}_N $ and $ {\cal B}_I$. More specifically, solutions starting below $S$ converge to $P_N$ and solutions starting above $S$ converge to $P_I$, i.e.,
\[
{\cal B}_N = \lbrace (x,y) \in \mathbb{R}^2_+: \ y<s(x) \rbrace
\ \ {\rm and} \ \
{\cal B}_I = \lbrace (x,y) \in \mathbb{R}^2_+: \ y>s(x) \rbrace.
\]
\end{description}
\end{theorem}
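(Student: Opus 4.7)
The plan is to prove the three items in sequence, combining the Stable Manifold Theorem at $P_C$ with the trapping rectangles from Lemma \ref{LemaKs} and the tangent-line computation from Lemma \ref{LemaReta}.

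For item 1, I would invoke the Stable Manifold Theorem to obtain $S$ as an analytic one-dimensional invariant curve, tangent at $P_C$ to the eigenvector $v_1$ whose slope $m$ is positive by Lemma \ref{LemaReta}. Consequently the two branches $S_L, S_R$ of $S\setminus\{P_C\}$ enter $K_L$ and $K_R$ respectively in a neighborhood of $P_C$, and since Lemma \ref{LemaKs} gives negative invariance of both rectangles, each branch stays in its respective rectangle throughout the whole backward flow. The branch $S_L$ lies in the compact set $\overline{K_L}$, so Poincar\'e--Bendixson together with the absence of limit cycles (Proposition \ref{propIntroSols}) forces its $\alpha$-limit to be an equilibrium; the only candidate in $\overline{K_L}\setminus\{P_C\}$ is $P_0$, yielding the heteroclinic $P_0 \to P_C$. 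The branch $S_R$ lies in the unbounded set $K_R$ where $-f>0$ and $-g>0$ by Lemma \ref{LemaKs}, so both coordinates strictly increase backward in time; since $\overline{K_R}\setminus\{P_C\}$ contains no equilibrium and no orbit of $S_R$ can exit $K_R$ across $x=A$ or $y=B$ (the field forbids this direction of crossing), $S_R$ is unbounded and ``comes from infinity.''

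For item 2, $f>0$ on $S_L$ means $\dot{x}>0$ along the orbit, so $x$ parametrizes $S_L$ smoothly as it runs from $0$ at $P_0$ to $A$ at $P_C$, exhibiting $S_L$ as a smooth graph $y=s(x)$ on $(0,A)$; symmetrically $f<0$ along $S_R$ produces a smooth graph on some interval $(A,M)$. The identity $M=+\infty$ follows by ruling out a finite vertical asymptote: if $x$ stayed bounded by $M$, an analysis of $dy/dx = g/f$ for large $y$ in the strip $A<x\le M$ shows that $y$ could approach infinity only if $x$ also escapes to infinity, contradicting the bound. At the saddle itself, $S$ is analytic and tangent to the non-vertical line of Lemma \ref{LemaReta}, so the implicit function theorem glues the two pieces into one smooth graph across $x=A$ with $s(A)=B$; finally, $S_L$ having $\alpha$-limit $P_0$ gives $s(x)\to 0$ as $x\to 0^+$, so defining $s(0)=0$ extends $s$ continuously to $[0,\infty)$.

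For item 3, Proposition \ref{propIntroSols} supplies the disjoint decomposition $\mathbb{R}^2_+ = \{P_0\}\cup S\cup {\cal B}_N\cup {\cal B}_I$. Since $\hat{S}$ is the graph of $s$, its complement in $\mathbb{R}^2_+$ is the disjoint union of two open, connected regions $R_- = \{(x,y)\in\mathbb{R}^2_+: y<s(x)\}$ and $R_+ = \{(x,y)\in\mathbb{R}^2_+: y>s(x)\}$; each basin, being open and disjoint from the invariant set $\hat{S}$, is a union of connected components of $\mathbb{R}^2_+\setminus\hat{S}$ and hence coincides with one of $R_-,R_+$. Evaluating at $P_N=(1,0)\in R_-$ (since $s(1)\geq B>0$) and $P_I=(0,1)\in R_+$ (since $s(0)=0<1$) identifies ${\cal B}_N=R_-$ and ${\cal B}_I=R_+$. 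The hardest step, I expect, is item 2: extending $S_R$ to arbitrarily large $x$ and gluing the two heteroclinic pieces into one smooth graph at $P_C$. The non-vertical tangent slope from Lemma \ref{LemaReta} and analyticity of the stable manifold drive the gluing, while the structure of the vector field on $\overline{K_R}$ is what precludes a finite right endpoint of the domain.
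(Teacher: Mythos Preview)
Your approach matches the paper's: Stable Manifold Theorem plus the negatively invariant rectangles of Lemma \ref{LemaKs}, Poincar\'e--Bendixson for the $\alpha$-limit of $S_L$, the sign of $f$ on $K_L\cup K_R$ to obtain the graph structure, and then identification of the two basins. Two of your technical choices are actually cleaner than the paper's: for uniqueness in item 2 you use that $\dot{x}=f\neq 0$ along each branch so $x$ is a monotone time-reparametrization (the paper instead argues by the Mean Value Theorem in $\mathbb{R}^n$ to produce a point with $f=0$), and for $M=+\infty$ you analyze $dy/dx\sim \delta y/(\alpha x)$ for large $y$, which is more explicit than the paper's one-line appeal to the absence of singularities and equilibria in $K_R$. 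For item 3 the paper uses a trajectory-crossing argument rather than your connectedness argument; both are fine.

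There is one genuine gap in item 1. You assert that the $\alpha$-limit of $S_L$ lies in $\overline{K_L}\setminus\{P_C\}$, but Poincar\'e--Bendixson only gives you $\overline{K_L}$, and a heteroclinic cycle through $P_C$ (or a homoclinic loop) is not excluded a priori. The paper addresses this explicitly: if $P_C$ were in the $\alpha$-limit of $S_L$, the orbit would, for some backward time, lie on the unstable manifold $U$; but by Lemma \ref{LemaReta} the tangent to $U$ at $P_C$ has negative slope $m_u<0$, so near $P_C$ the branches of $U$ sit in $\{x<A,\,y>B\}$ and $\{x>A,\,y<B\}$, both disjoint from $K_L$, contradicting $S_L\subset K_L$. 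You should insert this short argument (and the symmetric one for $S_R$).
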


\begin{proof}
\begin{description}

\item[1.] From Lemma \ref{LemaReta}, we know that, locally at the point $P_C=(A,B)$, the stable manifold $S$ is tangent to the line $l(x)$ in \eqref{eqTangLine} with slope $m>0$. From the Stable Manifold Theorem, there exists $\epsilon_1>0$ and a smooth function $s(x)$ defined in $[A-\epsilon_1,A)$ such that for $x_1 =A-\epsilon_1$ we have $s(x_1)<B$ and
\begin{equation}
\lbrace \phi (t,x_1,s(x_1)):\ t\geq 0 \rbrace =
S \cap L_s,
\label{eqScapLs}
\end{equation}
where $L_s=[x_1,A) \times [s(x_1),B)\subset K_L$. Define $$S_L = \lbrace \phi (t,x_1,s(x_1));\ t\in \mathbb{R}\rbrace .$$ By the invariance of $S$, and since $(x_1,s(x_1))\in S$, it follows that $S_L \subset S$. From \eqref{eqScapLs} we have that $\phi(t,x_1,s(x_1))\in K_L$ for all $t\geq 0$. Further, since $K_L$ is negatively invariant with respect the flow of $F$ (Lemma \ref{LemaKs}), we conclude that $\phi(t,x_1,s(x_1))\in K_L$ for all $t\in \mathbb{R}$. Therefore, $S_L \subset K_L$.

Now, we prove that $S_L$ is a heteroclinic connection between $P_0$ and $P_C$. Denote the $\alpha$-limit set of $\phi(t,x_1,s(x_1))$ by $L_\alpha$. Then $L_\alpha \in \bar{K_L}$ (the closure of $K_L$). Since \eqref{siscompeticao} has no limit cycles and $\bar{K_L}$ is a compact set, by the Poincar\'e--Bendixson Theorem, it follows that $L_\alpha$ is formed by a union of equilibrium points of \eqref{siscompeticao}. We claim that $L_\alpha=\lbrace P_0 \rbrace$. To show this, we need to show that $P_C\notin L_\alpha$, because $P_N,P_I \notin L_\alpha$. Suppose by contradiction that $P_C \in L_\alpha$. Then, it would exist $t_*<0$ such that $\phi(t_*,x_1,s(x_1)) \in U$, where $U$ is the unstable manifold $U$ of $P_C$. On the other hand, since $U$ is locally tangent to the line $l_u(x)$ in \eqref{eqUTangLine} with slope $m_u < 0$ (Lemma \ref{LemaReta}), by the Stable Manifold Theorem, there is $\epsilon_2>0$ and a smooth function $u(x)$ defined in $[A-\epsilon_2,A)$ such that for $x_2 =A-\epsilon_2$ we have $u(x_2)>B$ and
\[
\lbrace \phi (t,x_2,s(x_2)):\ t\leq 0 \rbrace =
U \cap R_u,
\]
where $R_u=(x_2,A) \times (B,u(x_2))$. This would imply that $\phi(t_*-t,x_1,s(x_1)) \in R_u$ for $t$ sufficiently large, which is a contradiction, since $\phi(t,x_1,s(x_1))\in K_L$ for all $t\in \mathbb{R}$ and $R_u \cap K_L = \emptyset$. Therefore, it follows that the unique option is $L_\alpha = \lbrace P_0 \rbrace$, i.e., the left branch of $S$ is a heteroclinic connection $S_L\subset K_L$ from $P_0$ to $P_C$. 

The remaining part of the statement, about the the right branch $S_R$, is proved in an analogous way. From Lemma \ref{LemaReta}, there exists $\epsilon_3>0$ and a smooth function $s(x)$ defined in $(A,A+\epsilon_3]$ such that for $x_3 =A+\epsilon_3$ we have $s(x_3)>B$ and
\begin{equation}
\lbrace \phi (t,x_3,s(x_3));\ t\geq 0 \rbrace =
S \cap R_s,
\label{eqScapRs}
\end{equation}
where $R_s=(A,x_3] \times (B,s(x_3)] \subset K_R$. Therefore, $\phi(t,x_3,s(x_3))\in K_R$ for all $t\geq 0$. Since $K_R$ is negatively invariant with respect the flow of $F$, the set $$S_R = \lbrace \phi (t,x_3,s(x_3));\ t\in \mathbb{R}\rbrace \subset S$$ is contained in $K_R$. The $\alpha$-limit set of $\phi(t,x_3,s(x_3))$, denoted by $R_\alpha$, is contained in $\bar{K}_R$, which does not contains  limit cycles neither equilibrium points other than $P_C$. Since $P_C \notin R_\alpha$ (analogous argument as before), it follows by the Poincar\'e--Bendixson Theorem that $S_R$ is unbounded and $R_\alpha$ must be a point at the infinity.

We have defined $S_L$ and $S_R$ and showed that one inclusion in \eqref{eqSetsS} is satisfied. To show the equality, let $(x,y)\in S$. Then, $\phi(t,x,y) \to P_C$ when $t\to \infty$. Therefore, for sufficiently large $t$, either $\phi(t,x,y)\in L_s$ or $\phi(t,x,y)\in R_s$. Since $\phi(t,x,y) \in S$, we have either $\phi(t,x,y)\in S_L$ or $\phi(t,x,y)\in S_R$, which imply, that $(x,y)\in S_L \cup S_R$;

\item[2.] For $x=0$, since the set $\lbrace (0,y);\ y>0\rbrace $ is contained in the basin of attraction of $P_I$, the unique $y$ such that $(0,y)\in \hat{S}$ is $y=0$. Therefore, we define $s(0)=0$.

Now, fix $x$ in the interval $0<x<A$. First, since $S_L$ is a smooth curve connecting $P_C=(A,B)$ to $P_0=(0,0)$, it follows from the Intermediary Value Theorem that there is $y \in (0,B)$ such that $(x,y) \in S_L \subset S$. Suppose by contradiction that there exist $y_2\neq y$, $y_2\geq 0$ such that $(x,y_2)$ is also a point in $S_L$. Consider the trajectory $\phi_L(t)=\phi(t,x,y_1) \in S_L$. We have $\phi_L(0)=(x,y_1)$ and there exists $t_2$ such that $\phi_L(t_2)=(x,y_2)$. Assume $t_2>0$ without loss of generality. It follows from the Mean Value Theorem in $\mathbb{R}^n$ that there is $\bar{t} \in (0,t_2)$ such that
\[
\phi_L'(\bar{t}) = \dfrac{\phi_L(t_2)-\phi_L(0)}{t_2} = \dfrac{(0,y_2-y)}{t_2}.
\]
On the other hand, we have $\phi_L'(\bar{t}) = F(\bar{x},\bar{y}) = (f(\bar{x},\bar{y}),g(\bar{x},\bar{y}))$, where $(\bar{x},\bar{y})=\phi_L(\bar{t}) \in S_L$. This would imply that $f(\bar{x},\bar{y}) =0$, which is a contradiction, since $(\bar{x},\bar{y}) \in S_L \subset K_L$ and $f(x,y)>0$ in $K_L$. Therefore, there is an unique $y>0$ such that $(x,y) \in S_L$.

For $x=A$, we define $y=B$ (from item 1 in this Theorem \ref{teoremaS}, $S$ intersects the vertical line $x=A$ only at $y=B$). For $x>A$, the rationale is analogous to the case $0<x<A$. The only remark is that we have to guarantee that the domain of $s(x)$ is the whole set of values $x>0$, i.e., the right branch of $S$ does not diverges to infinity at a finite $x$. This is guaranteed by the fact that $F$ does not have any singularity neither any equilibrium point in $K_R$. Therefore, $s(x)$ is a well-defined function whose domain is $x\geq 0$. It follows from the Stable Manifold Theorem that $s(x)$ is smooth;

\item[3.] Consider a point $(x,y) \in \mathbb{R}^2_+$ below $\hat{S}$, i.e., such that $y<s(x)$. The only possibilities are either $(x,y) \in {\cal B}(P_N)$ or $(x,y) \in {\cal B}(P_I)$. Since $P_I$ is above $S$, i.e., $1>s(0)$, if $(x,y) \in {\cal B}(P_I)$, then at some point the solution $\phi(t,x,y)$ should satisfy $y=s(x)$, which would imply in $(x,y)\in S$, a contradiction.
\end{description}
\end{proof}

Now, we prove Theorem \ref{teoMain}, item 1. Statements (a) and (b) follow from Theorem \ref{teoremaS}. To prove statement (c), i.e., that $s(x)$ is strictly increasing, note that $s'(A)=m>0$ (Lemma \ref{LemaReta}) and $s'(x)=dy/dx=(dy/dt)/(dx/dt)=g(x,y)/f(x,y)$ for $x \neq A$. Then, it is enough to observe that $( S \setminus \{ (A,B) \} ) \subset K_L \cup K_R$ by Theorem \ref{teoremaS},  item 1, and that $g(x,y)/f(x,y) >0 $ in $K_L \cup K_R$ by Lemma \ref{LemaKs}, item 2.

\subsection{Proof of Theorem \ref{teoMain}, item 2}

In the following, we show that the function $y=s(x)$, describing the separatrix, satisfies the integral equation \eqref{eqIntegralS}. From system \eqref{siscompeticao} we have that the solutions satisfy
$$
\dfrac{d}{dt}\left(
\log y(t)
\right)
=\dfrac{y'(t)}{y(t)}=\delta ( 1 - y(t)- \beta x(t) ), \
\dfrac{d}{dt}\left(
\log x(t)
\right)
= \dfrac{x'(t)}{x(t)}=\delta ( 1 - x(t)- \alpha y(t) ).
$$
Thus,
$$
\dfrac{d}{dt}\left(
\log \dfrac{y(t)}{x(t)^\delta}
\right)
=
\dfrac{d}{dt}\left(
\log y(t) - \delta \log x(t)
\right)
=
\delta  \left(  (\alpha-1) y(t) -(\beta -1) x(t) \right).
$$
Integrating both sides from $0$ to $t$ we obtain that all solutions of system \eqref{siscompeticao} satisfy
$$
\log \dfrac{y(t)}{ x(t)^\delta} - \log \dfrac{y_0}{ x_0^\delta} = 
 \delta \int_0^t  (\alpha-1) y(\tau) -(\beta -1) x(\tau) d\tau,
$$
where $(x_0,y_0)=(x(0),y(0))$. If we consider a solution $(x(t),y(t))$ starting in $S$, we have that $(x(t),y(t)) \to P_C= (A,B)$ when $t \to \infty$. Thus, taking this limit in the last equation, we obtain that the points $(x_0,y_0)$ in $S$ satisfy
$$
\log \dfrac{y_0}{ x_0^\delta} = \log \dfrac{B}{ A^\delta} - 
 \delta \int_0^\infty  (\alpha-1) y(\tau) -(\beta -1) x(\tau) d\tau.
$$
Lemma \ref{LemaInt} below guarantees that the above improper integral converges. Applying the exponential function leads to
$$
y_0=B\left(\dfrac{x_0}{A}\right)^\delta \exp \left ( -\delta \int_0^\infty  (\alpha-1) y(\tau) -(\beta -1) x(\tau) d\tau \right).
$$
Using the following change of variables in the above integral, we rewrite it terms of $x$: if $x=x(\tau)$, then $x(0)=x_0$, $x(\infty)=A$ and $dx=x'(\tau)d\tau=f(x(\tau),y(\tau))d\tau$. Further, since $(x(\tau),y(\tau))\in S$, we have $y(\tau)=s(x(\tau))$, and thus $d\tau = dx/f(x,s(x))$. Therefore, the last equation assumes the form
$$
y_0=B\left(\dfrac{x_0}{A}\right)^\delta \exp \left ( -\delta \int_{x_0}^A  \dfrac{(\alpha-1) s(x) -(\beta -1) x}{f(x,s(x))} dx \right).
$$
Replacing $x=\bar{x} $ inside the integral and $x_0=x,$ $y_0=y=s(x)$, we conclude that points $(x,s(x))\in S$ satisfy the integral equation
$$
s(x)=B\left(\dfrac{x}{A}\right)^\delta \exp \left ( \delta \int_A^x  \dfrac{(\alpha-1) s(\bar{x}) -(\beta -1) \bar{x}}{f(\bar{x},s(\bar{x}))} d\bar{x} \right),
$$
which proofs the first statement of item 2. The last equation is equivalent to
$$
s(x)=s^*(x) C(x,s(x)),
$$
where $s^*(x)=B(x/A)^\delta$ and
$$
C(x,s(x)) = 
\exp \left ( \delta \int_A^x  \dfrac{(\alpha-1) s(\bar{x}) -(\beta -1) \bar{x}}{f(\bar{x},s(\bar{x}))} d\bar{x} \right).
$$
Note that, for $\delta=1$, $s{^\ast}(x)=B x/A$ satisfies the equation $g(x,s^{\ast}(x))=(s^{\ast})'(x)f(s,s^{\ast}(x))$, for all $x\geq 0$. Thus, $y(t)=s^*(x(t))$ is a solution of system \eqref{siscompeticao} when $\delta=1$. Hence, $s(x)$ and $s^*(x)$ coincide when $\delta=1$. By continuity arguments, $s^*(x)$ is a uniform approximation for $s(x)$ for $|\delta-1|$ small enough.

\begin{lemma} Let $(x(t),y(t))$ be a solution for system \eqref{siscompeticao}, with $(x(0),y(0)) \in {S}$. Then, the improper integral 
\[
\displaystyle \int_0^\infty  (\alpha-1) y(\tau) -(\beta -1) x(\tau) d\tau
\]
converges.
\label{LemaInt}
\end{lemma}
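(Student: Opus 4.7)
The plan is to exploit two facts: an algebraic identity showing that the integrand vanishes at the saddle point, together with the exponential convergence of solutions on $S$ to $P_C$. Together these produce absolute convergence.

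First, I would verify the key identity. Write $h(x,y) = (\alpha-1)y - (\beta-1)x$ for the integrand. Substituting $A = (\alpha-1)/(\alpha\beta-1)$ and $B = (\beta-1)/(\alpha\beta-1)$ gives $(\alpha-1)B = (\alpha-1)(\beta-1)/(\alpha\beta-1) = (\beta-1)A$, so $h(A,B) = 0$. Since $h$ is linear, this yields the Lipschitz-type bound $|h(x,y)| \leq C_0\, \|(x,y) - (A,B)\|$ for some constant $C_0 = C_0(\alpha,\beta) > 0$.

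Second, I would invoke the hyperbolicity of $P_C$. From the proof of Lemma \ref{LemaReta}, the linearization of $F$ at $P_C$ admits the negative eigenvalue $\lambda_1 < 0$. By the Stable Manifold Theorem (already used in Theorem \ref{teoremaS}) combined with standard linearization estimates at a hyperbolic equilibrium, for any $0 < \mu < |\lambda_1|$ and any initial condition $(x(0),y(0)) \in S$ there exists a constant $C_1 > 0$ such that
\[
\|(x(\tau),y(\tau)) - (A,B)\| \leq C_1\, e^{-\mu \tau}, \qquad \tau \geq 0.
\]
This is obtained by choosing $T \geq 0$ after which the trajectory enters a small neighborhood of $P_C$ in which the linearization dominates (giving the exponential decay on $[T,\infty)$); on the bounded transient interval $[0,T]$ the estimate is trivial because all solutions of system \eqref{siscompeticao} are bounded by Proposition \ref{propIntroSols}, item 2, and in particular $\|(x(\tau),y(\tau)) - (A,B)\|$ is bounded on $[0,T]$ by a constant that may be absorbed into $C_1$.

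Combining the two estimates gives $|h(x(\tau),y(\tau))| \leq C_0 C_1\, e^{-\mu \tau}$ for all $\tau \geq 0$, so the integrand is dominated by an absolutely integrable exponential, proving convergence of the improper integral. The only delicate point is the uniform exponential decay along $S$; this is a standard consequence of hyperbolicity and the invariance of the stable manifold, so I do not anticipate any substantive obstacle.
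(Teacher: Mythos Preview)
Your proof is correct and follows essentially the same approach as the paper: both arguments rest on the observation that the integrand $(\alpha-1)y-(\beta-1)x$ vanishes at $P_C=(A,B)$, combined with the exponential convergence of trajectories on $S$ toward $P_C$. The paper obtains the decay by writing out an explicit Hartman--Grobman asymptotic form $(x(t),y(t))=(A,B)+e^{\lambda_1 t}v_1+e^{\lambda_1 t}(k_1(t),k_2(t))$ with bounded $k_i$, whereas you invoke the standard stable-manifold exponential estimate directly, but the substance is identical.
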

\begin{proof}
Since ${S}$ in invariant and is the stable manifold of $P_C=(A,B)$, we have $(x(t),y(t)) \in {S}$. From the Hartman--Grobman Theorem, there exists $t_1>0$ and continuous bounded functions $k_1(t),k_2(t)$ such that 
\begin{equation}
(x(t),y(t)) = (A,B)+ e^{\lambda_1 t} v_1+e^{\lambda_1 t} (k_1(t),k_2(t))
\label{eqXYinS}
\end{equation}
for $t\geq t_1$, with $\lim_{t\to \infty}{k_i(t)}=0 $, where $\lambda_1<0$ is the eigenvalue of the Jacobian matrix at $(A,B)$ and $v_1=(v_{11},1)$ is the associated eigenvector (see Lemma \ref{LemaReta}). Thus, for $\tau > t_1$, using \eqref{eqXYinS} we obtain that the integrand is written as
$$
(\alpha-1) y(\tau) -(\beta -1) x(\tau) = (\alpha\beta-1) e^{\lambda_1 t} \left(A +A k_2(t)-Bv_{11}-B k_1(t) \right).
$$
Since $\lambda_1<0$ and $k_1(t)$ and $k_2(t)$ are continuous and bounded, it follows that the integral
\[
\lim_{t\to\infty} \int_{t_1}^t (\alpha-1) y(\tau) -(\beta -1) x(\tau)  d\tau
\]
converges, which shows the lemma.
\end{proof}
 
\subsection{Proof of Theorem \ref{teoMain}, item 3}

A solution $(x(t),y(t))$ of system \eqref{siscompeticao} starting in ${S}$ satisfies $y(t)=s(x(t))$. Differentiating both sides with respect to $t$, we obtain $g(x(t),y(t))=s'(x(t)) f(x(t),y(t))$. Therefore, $s'(x)=g(x,s(x))/f(x,s(x))$ for $x>0$ and $x\neq A$. On the other hand, we know that $s'(A)$ is given by the slope of the eigenvector $v_1$, i.e., $s'(A)=m$, as given in \eqref{eqSlopem}. Since $s(x)$ is smooth by Theorem \ref{teoremaS}, it follows that the function
\[
s'(x)=
\left\lbrace
\begin{array}{cl}
\dfrac{g(x,s(x))}{f(x,s(x))}, & 0< x \neq A \vspace{0.2cm}\\
m, & x=A,
\end{array}
\right.
\]
is smooth. Thus, the function
\[
h(x,y,\xi)= \left \{
\begin{array}{cl}
\dfrac{g(x,y)}{f(x,y)}, & (x,y) \neq (A,B)\vspace{0.2cm}\\
m, & (x,y)=(A,B),
\end{array}
\right.
\]
is smooth, where $\xi \in \{\alpha,\beta,\delta\}$ is one chosen model parameter. We denote by $\varphi(x,x_0,\xi)$ the maximal solution of the differential equation
\[
y'=h(x,y,\xi),
\]
through the point $x_0>0$, which exists since $h$ is smooth.

We start by presenting the following lemma, which provides a formula to calculate the derivative of $y=s(x)=s(x,\xi)$ with respect to the parameter $\xi \in \{\alpha,\beta,\delta\}$.

\begin{lemma}\label{LemaTI3}
$S$ is differentiable with respect to the model parameters, i.e., the function $y=s(x,\xi)$, $x>0$, is differentiable and its derivative with respect to the parameter $\xi \in \{ \alpha, \beta , \delta \}$ is given by
\[
\dfrac{\partial}{\partial \xi}s(x,\xi) =\exp\left(\int_{A}^x a(s,\xi)\,\mathrm{d}s\right)\left(C+\int_{A}^x b(u,\xi)\exp\left(-\int_{A}^u a(s,\xi)\,\mathrm{d}s\right)\,\mathrm{d}u \right),
\]
where
\[
{\setlength\arraycolsep{2pt}
\begin{array}{rcl}
C & = & \dfrac{\partial }{\partial \xi}(B)-m\dfrac{\partial }{\partial \xi}(A),\vspace{0.2cm}\\
a(x,\xi) & = & \dfrac{\partial }{\partial y} h(x,\varphi(x,A,\xi),\xi),\vspace{0.2cm}\\
b(x,\xi) & = & \dfrac{\partial }{\partial \xi} h(x,\varphi(x,A,\xi),\xi).
\end{array}}
\]
\end{lemma}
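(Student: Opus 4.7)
The plan is to derive a linear first-order ODE in $x$ satisfied by the parameter derivative $u(x) := \partial_\xi s(x,\xi)$, equip it with a boundary value at $x=A$ obtained from the constraint that the separatrix pass through the saddle point, and then integrate explicitly by the standard integrating-factor formula.

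The first task is to justify the very existence of $\partial_\xi s(x,\xi)$. Since $F$ is polynomial in $(x,y)$ and $(\alpha,\beta,\delta)$ and $P_C$ is hyperbolic for $\alpha,\beta>1$, the smooth Stable Manifold Theorem with parameters yields that $S$ depends smoothly on $\xi$ near $P_C$. Combining this with the fact (used in the proof of Theorem \ref{teoremaS}) that $S$ can be extended globally as the graph of the solution of the smooth scalar ODE $y' = h(x,y,\xi)$ (with the removable singularity at $P_C$ resolved by the value $m$), and with the smooth dependence of solutions of that ODE on parameters, one concludes that $s(x,\xi)$ is jointly smooth in $(x,\xi)$ for $x>0$.

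Next, differentiating the identity $\partial_x s(x,\xi) = h(x,s(x,\xi),\xi)$ with respect to $\xi$ and exchanging the order of partial derivatives gives
\[
u'(x) = \partial_y h(x,s(x,\xi),\xi)\, u(x) + \partial_\xi h(x,s(x,\xi),\xi) = a(x,\xi)\, u(x) + b(x,\xi),
\]
where $a$ and $b$ match the definitions in the statement, using $\varphi(x,A,\xi)=s(x,\xi)$. To fix the constant of integration, I would differentiate the identity $s(A(\xi),\xi) = B(\xi)$ with respect to $\xi$: by the chain rule and the fact that $\partial_x s(A,\xi) = m$ (Lemma \ref{LemaReta}),
\[
m\,\partial_\xi A + u(A) = \partial_\xi B, \qquad \text{i.e.,} \qquad u(A) = \partial_\xi B - m\,\partial_\xi A = C.
\]

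Finally, solving the linear scalar ODE $u' - a\,u = b$ with $u(A)=C$ by the variation-of-constants formula (multiplying by the integrating factor $\exp(-\int_A^x a(s,\xi)\,\mathrm{d}s)$ and integrating from $A$ to $x$) reproduces the expression claimed in the lemma. The main technical hurdle is the initial step: joint smoothness of the globally extended separatrix in $(x,\xi)$. Once the smoothness of $h$ at $P_C$ (the removable singularity) and the smooth dependence of $S$ on $\xi$ in a neighborhood of $P_C$ are established by appealing to the Stable Manifold Theorem with parameters, the propagation of smoothness along the flow of the scalar ODE is standard, and the remaining computations are routine manipulations of a linear first-order equation.
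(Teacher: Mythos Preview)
Your proposal is correct and follows essentially the same route as the paper: both derive the linear variational equation $u'=a(x,\xi)u+b(x,\xi)$ for $u=\partial_\xi s$, obtain the initial value $u(A)=\partial_\xi B - m\,\partial_\xi A$ from the constraint that the separatrix pass through $P_C=(A(\xi),B(\xi))$, and then solve by the integrating-factor formula. Your treatment is slightly more explicit than the paper's in justifying joint smoothness of $s$ in $(x,\xi)$ via the parameter-dependent Stable Manifold Theorem, whereas the paper simply assumes smoothness of $h$ and appeals to standard dependence-on-parameters results; otherwise the arguments coincide.
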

\begin{proof} We know that $s(x,\xi)=\varphi(x,A,\xi)$ and, therefore,
\[
\dfrac{\partial}{\partial \xi}s(x,\xi)=\dfrac{\partial}{\partial \xi}\varphi(x,A,\xi),
\]
where $\varphi(x,A,\xi)$ satisfies
\begin{equation}\label{eq:PVI}
\left \{
{\setlength\arraycolsep{2pt}
\begin{array}{rcl}
\dfrac{\partial}{\partial x}\varphi(x,A,\xi) & = & h(x,\varphi(x,A,\xi),\xi),\vspace{0.2cm}\\
\varphi(A,A,\xi) & = & B.
\end{array}}
\right.
\end{equation}

Differentiating both sides of \eqref{eq:PVI}, with respect to $\xi \in \{\alpha,\beta,\delta\}$, it follows that
\begin{equation*}
\left \{
{\setlength\arraycolsep{2pt}
\begin{array}{rcl}
\dfrac{\partial}{\partial x}\left(\dfrac{\partial}{\partial \xi}\varphi(x,A,\xi)\right) & = & \dfrac{\partial }{\partial y}h(x,\varphi(x,A,\xi),\xi)\dfrac{\partial}{\partial \xi}\varphi(x,A,\xi)+\dfrac{\partial }{\partial \xi}h(x,\varphi(x,A,\xi),\xi),\vspace{0.2cm}\\
\dfrac{\partial}{\partial \xi}\varphi(A,A,\xi) & = & \dfrac{\partial }{\partial \xi}(B)-h(A,B,\xi)\dfrac{\partial }{\partial \xi}(A).
\end{array}}
\right.
\end{equation*}
The above initial value problem is a linear problem with the form
\[
\dfrac{\partial }{\partial x} z(x,\xi) = a(x,\xi) z(x,\xi)+b(x,\xi),  \ \ z(a,\xi)=C.
\]
for $z=\partial \varphi(x,A,\xi)/\partial \xi$. Solving it with standard methods we obtain the desired result.
\end{proof}

Now we take $\xi=\alpha$ in Lemma \ref{LemaTI3} and conclude that $\partial s(x,\alpha)/\partial \alpha<0$, for each $x>0$ and $\alpha>1$. In fact, by \eqref{eqDerivAparam} and \eqref{eqSlopem}, it follows that $C<0$ and a straightforward calculation shows that
\[
{\setlength\arraycolsep{2pt}
\begin{array}{rcl}
b(x,\alpha) & = & \dfrac{\partial }{\partial \alpha} h(x,s(x,\alpha),\alpha)\vspace{0.2cm}\\
& = & \dfrac{\delta s(x,\alpha)^2(1-\beta x-s(x,\alpha))}{x(1-x-\alpha s(x,\alpha))},\quad \forall x>0.
\end{array}}
\]
So, $b(x,\alpha)>0$, if $0<x<A$ and $b(x,\alpha)<0$, if $x>A$. 

In the same way, if $\xi=\beta$, then $\partial s(x,\beta)/\partial \beta>0$, for each $x>0$ and $\beta>1$, since $C>0$, by  \eqref{eqDerivBparam} and \eqref{eqSlopem}, and
\[
{\setlength\arraycolsep{2pt}
\begin{array}{rcl}
b(x,\beta) & = & \dfrac{\partial }{\partial \beta} h(x,s(x,\beta),\beta)\vspace{0.2cm}\\
& = & -\dfrac{\delta s(x,\beta)}{1-x-\alpha s(x,\beta)},\quad \forall x>0.
\end{array}}
\]
Thus, $b(x,\beta)<0$, if $0<x<A$ and $b(x,\beta)>0$, if $x>A$.

Finally, when $\xi=\delta$, we have $C=0$ and
\[
{\setlength\arraycolsep{2pt}
\begin{array}{rcl}
b(x,\delta) & = & \dfrac{\partial }{\partial \delta} h(x,s(x,\delta),\delta)\vspace{0.2cm}\\
& = & \dfrac{s(x,\delta)(1-\beta x-s(x,\delta))}{x(1-x-\alpha s(x,\delta))}>0,\quad \forall x>0.
\end{array}}
\]
In this case, we conclude that $\partial s(x,\delta)/\partial \delta<0$, if $0<x<A$ and $\delta>0$. On the other hand, $\partial s(x,\delta)/\partial \delta>0$, when $x>A$ and $\delta>0$.

\subsection{Proof of Theorem \ref{teoMain}, item 4}

Changing the time variable $t \mapsto \delta t$, system \eqref{siscompeticao} becomes

\begin{equation}\label{sisPert7}
\left\{
{\setlength\arraycolsep{2pt}
\begin{array}{rcl}
\delta \dfrac{dx}{dt}  & = &  x \left(1-x-\alpha y\right), \vspace{0.2cm} \\
\dfrac{dy}{dt}   & = & y   \left(1-y-\beta x\right).
\end{array}}
\right.
\end{equation}
Since $\delta \to 0$, we use Singular Perturbation Theory. The solutions with $x(0)>0$ and $y(0)>0$ will quickly approach the slow manifold given by the line passing by $P_N$,
\[
1-x-\alpha y=0.
\]
On the slow manifold, the dynamics is given by
\[
\dfrac{dy}{dt}   = y   \left(1-y-\beta (1-\alpha y) \right).
\]
It is easy to see that $dy/dt>0$ for $y>B$, and $dy/dt<0$ for $y<B$. Thus, solutions above the line $y=B$ converge to $P_I$, while solutions below this line converge to $P_N$. Therefore, the separatrix is given by this line. This proves statement (a). 

The proof of statement (b) is analogous. System \eqref{siscompeticao} can be written as
\begin{equation}\label{sisPert8}
\left\{
{\setlength\arraycolsep{2pt}
\begin{array}{rcl}
\dfrac{dx}{dt}  & = &  x \left(1-x-\alpha y\right), \vspace{0.2cm} \\
\dfrac{1}{\delta}\; \dfrac{dy}{dt}   & = & y   \left(1-y-\beta x\right).
\end{array}}
\right.
\end{equation}
Using Singular Perturbation Theory, we find that the solutions with $x(0)>0$ and $y(0)>0$ will quickly approach the slow manifold, which is the line passing by $P_I$,
\[
1-y-\beta x=0.
\]
Along the slow manifold, the dynamics is given by
\[
\dfrac{dx}{dt}   = x   \left(1-x-\alpha (1-\beta x) \right).
\]
For $x>A$ we have $dx/dt>0$, while $dx/dt<0$ for $x<A$. Therefore, the line $x=A$ is the separatrix.

\section{Conclusion}\label{secConclusion}

Understanding and quantifying resilience is crucial to anticipate the possible critical transitions occurring in a system. Characterizing the influence of the system intrinsic parameters on resilience is the first step towards the direction to employ rational decisions and acting on the system in order to avoid undesirable transitions. In this paper, we apply several tools of qualitative theory of ordinary differential equations to quantify and characterize the resilience of competing populations as described by the classical Lotka-Volterra model.

We study such model in the bi-stable scenario, in the context of high interspecific competition where one population is regarded as native and other as invader. The mathematical methods used here allowed us to analytically characterize an invariant manifold of a saddle-point as the graph of a smooth function and to characterize its behavior in terms of the model parameters. Since such manifold is the boundary between the basins of attraction of the equilibrium points corresponding to survival of one population and extinction to the other, those characterizations allowed to deduce the qualitative behavior of two resilience measures of one of the populations (regarded as the ``native'' population). Such measures are the \textit{precariousness}, which quantifies the minimum number of invader individuals required to lead to a successful invasion (which completely extincts the native populations) and the \textit{latitude}, which quantifies the probability that a random but natural initial condition leads to the survival of the native population.

Our results showed how adaptations on the native population behavior may protect it against the invasion of a second population. As expected, the results indicate that increasing competitiveness is always a advantageous strategy for a population, while with respect to reproduction, it may not be the case. We showed that within a \textit{pioneering} context where both populations initiate with few individuals within the environment, increasing reproduction leads to an increase in resilience, thereby increasing the chances of survival. On the contrary, within an environment initially dominated by a native population which is invaded by a second population, decreasing reproduction leads to an increase in resilience, thereby increasing the chances of survival. All results devised here are analytic and extend the numerical results that we obtained in a previous publication \cite{fassoni2014basins}.

In summary, we believe that this work brings near to each other the theoretical concepts of ecological resilience and the mathematical methods of differential equations and has the potential to stimulate the development and application of new mathematical tools for ecological resilience. As far as we now, no previous work has developed and applied analytical methods to study ecological resilience with the approach provided here. Furthermore, as a outcome of such resilience analysis for the model considered here, the biological implications provide interesting insights for the dynamics of competing species and show the concrete benefits and importance of considering ecological resilience when analyzing mathematical models in biology.

\bigskip

\noindent {\bf Acknowledgement}: The first author was partially supported by CAPES/P\'os-Doutorado no Exterior [grant number 88881.119037/2016-01]. The second author is partially supported by Funda\c c\~ao de Amparo \`a Pesquisa do Estado de Minas Gerais -– FAPEMIG [project number APQ–01158–17].


\end{document}